\renewcommand{\d}{{\delta}}
\newcommand{\beqn}{\begin{equation}}
\newcommand{\eeqn}{\end{equation}}
\newcommand{\E}{{\mathbb{E}}}
\newcommand{\one}{{\mathbbm{1}}}
\newtheorem{theorem}{Theorem}[section]
\newtheorem{corollary}{Corollary}[section]
\newtheorem{definition}{Definition}[section]
\newtheorem{lemma}{Lemma}[section]
\newtheorem{remark}{Remark}[section]
\newtheorem{assumption}{Assumption}[section]
\begin{document}

\title{Convergence Rate Analysis of a Stochastic  Trust Region Method via Supermartingales}
%
\author{Jose Blanchet\thanks{Department of Industrial Engineering and Operations Research, Columbia University
500 West 120th street, New York, NY 10027, USA. {\tt jose.blanchet@columbia.edu.}}
\and   Coralia Cartis\thanks{Mathematical Institute, University of Oxford, Radcliffe Observatory Quarter, Woodstock Road,
Oxford, OX2 6GG, United Kingdom. {\tt cartis@maths.ox.ac.uk}. This work was partially supported by the Oxford University EPSRC Platform Grant
 EP/I01893X/1.}
 \and  Matt Menickelly\thanks{Mathematics and Computer Science Division, Argonne National Laboratory, Lemont, IL 60439, USA. {\tt mmenickelly@anl.gov}.
 The work of this author was partially supported by NSF Grant DMS 13-19356.}
\and Katya Scheinberg\thanks{Department of Industrial and Systems Engineering, Lehigh University,
Harold S. Mohler Laboratory, 200 West Packer Avenue, Bethlehem, PA 18015-1582, USA.
{\tt katyas@lehigh.edu}. The work of this author was  partially supported by NSF Grants, DMS 13-19356, CCF 13-20137 and CCF 16-18717. It was partially performed
 while the author was visiting the University of Oxford, sponsored by the Visiting Professorship Grant VP1-2016-053 from the Leverhulme Trust. } 
}
\date{\today}

\maketitle

\begin{abstract}
We propose a novel framework for analyzing convergence rates of stochastic optimization algorithms with adaptive step sizes. This framework is based on analyzing properties of an underlying generic stochastic process, in particular by deriving a bound on the expected stopping time of this process. We utilize this framework to analyze the bounds on expected global convergence rates of a stochastic variant of a traditional trust region method, introduced in  \cite{ChenMenickellyScheinberg2014}. While traditional trust region methods rely on exact 
computations of the gradient, Hessian and values of the objective function, this method assumes that these values are available up to some dynamically adjusted accuracy. Moreover, this accuracy is assumed to hold only with some  sufficiently large, but fixed, probability, without any additional restrictions on the variance of the errors. This setting applies, for example,  to standard stochastic optimization and machine learning formulations.  Improving upon the analysis in  \cite{ChenMenickellyScheinberg2014}, we show that the stochastic process defined by the algorithm satisfies the assumptions of our proposed general framework, with the stopping time defined  as reaching accuracy  $\|\nabla f(x)\|\leq \epsilon$.  The resulting bound for this stopping time is $O(\epsilon^{-2})$, under the assumption of sufficiently accurate stochastic gradient, and is the first global complexity bound for a stochastic trust-region method.  Finally, we apply the same framework to derive second order complexity bound under some additional assumptions. 
\end{abstract}

\section{Introduction}
In this paper we aim to solve   a stochastic unconstrained, possibly nonconvex, optimization problem 
 \begin{eqnarray}\label{eq:object}
\min\limits_{x\in \mathbb{R}^n} \,  f(x)
\end{eqnarray}
where  $f(x)$ is a function which is  assumed to be smooth and bounded from below, and whose value can only be computed with some noise. 
Let $\tilde{f}(x,\xi)$ be the noisy computable version of $f$, where the noise $\xi$ is a random variable. One of the most common settings is
\[
f(x)=  \mathbb{E}_\xi[ \tilde{f}(x,\xi) ] .
\]

Stochastic optimization methods, in particular stochastic gradient descent (SGD), have recently become the focus of much research in optimization, especially in applications to machine learning domains. This  is because in machine learning the objective function of the optimization problem is typically a sum of a (possibly) very large number of terms, each term being the loss function evaluated  using  one data example. This objective function can also be viewed as an expected loss, in which case it cannot be accurately computed, but can only be evaluated approximately, given a subset of data samples. During the last decade significant theoretical and algorithmic advances 
were developed for convex optimization  problems, such as logistic regression and support vector machines. However, with the recent practical success of deep neural networks and other nonlinear, nonconvex ML models, the focus has shifted to the analysis and development of methods for nonconvex optimization problems. 
While SGD remains the method of choice in the nonconvex setting for ML applications, theoretical results are weaker than those in the convex case. In particular,
little has been achieved in terms of convergence rates. A notable paper \cite{GhadimiLan} is the first to provide convergence rates guarantee of a sort for a randomized stochastic gradient method in nonconvex setting. This method, however, utilizes a carefully chosen step size and a randomized stopping scheme, which are quite different from what is used in practice. 

As an alternative to the basic SGD several {\em variance reducing} stochastic methods have been proposed recently, such as SAGA \cite{SAGA}, SVRG  \cite{SVRG} and SARAH \cite{nguyen2017sarah}. They enjoy much stronger convergence rates than SGD, and have  been extended to nonconvex problems \cite{nonconvexSVRG,nguyen2017sarah_nonconvex}. However,
these methods specifically exploit the structure of the ML problems, where the objective function is a sum over a deterministic (if large) set of data. 
SVRG requires the full gradient of the objective function to be computed on some (but not all) of the iterations. Hence, SVRG, essentially is a hybrid between  SGD and the full gradient method applied to a finite sum. The overall convergence rate per number of data accesses for SVRG seems better than those for the other two  methods. From the practical perspective, however,  SGD has low per-iteration complexity and high number of iterations and thus is not  effective in a distributed setting, while each iteration of a 
full gradient method can be efficiently distributed, reducing the overall wall-clock time.   SVRG (as well as SARAH) being  a hybrid does not easily fit with either setting because it alternates between cheap stochastic gradient computation, which have to be sequential and expensive full gradient computations, which can be distributed. In other words, the superior theoretical computational complexity of the SVRG does not necessarily reflect its practical performance. Moreover, the assumption that the data set is fixed (deterministic) contradicts the ultimate goal of learning, which is to obtain a solution with good generalization performance.  The method we describe in this paper  is applicable in the purely  stochastic setting (without assuming that there is a fixed finite set of data) and as our theory shows, it relies on variance reduction that can simply be achieved by choosing 
adaptive sample sizes that tend to grow  as the algorithm progresses to optimality. Such adaptive schemes have been proposed in the literature primarily for gradient descent methods and in a convex setting \cite{RByrd_etal_2012,Schmidt}.  

With the rise of interest in nonconvex optimization, the ML community started to consider a classical alternative to gradient descent/line search methods - trust region methods \cite{TRBook, Lin07trustregion, Dauphinetal_2014}. Their usefulness is largely dictated by their ability to utilize negative curvature in Hessian approximations and hence, potentially, escape the neighborhoods of saddle points \cite{Dauphinetal_2014}, which can significantly slow down  or even trap a line search method. It is argued, that 
while saddle points are undesirable, the local minima are typically sufficient for the purposes of training nonconvex ML models, such as deep neural networks. There has been a number of recent works that propose trust-region methods that use stochastic gradient and Hessian estimates \cite{GrattonEtAl2017, Mahoney}, but they all assume  that the objective function is deterministic.  One of the original stochastic trust region  methods for this stochastic optimization setting has been proposed in \cite{ChangHongWan} and a more sophisticated adaptive method has been recently introduced  in \cite{2015sarhasetal}.
For both methods the convergence is achieved by repeatedly sampling the function values (and gradients, when applicable) so that eventually the  estimates
 become asymptotically error-free with probability 1. No convergence rates have been derived for these algorithms, because the  progress happens in the assymptotics.  Fully stochastic versions of trust region methods with adaptive sampling, such as may be used  in ML context, have not yet been explored to our knowledge. In addition, our analysis applies to the setting where the objective functions and gradient estimates may be biased.

The method, which we analyze in this paper and which we will refer to as STORM (Stochastic trust region method with Random Models), has been introduced in  \cite{ChenMenickellyScheinberg2014}, where the authors prove the almost-sure convergence 
to a first order stationary point.  It is a stochastic, variance reducing  trust region method  which is essentially a minor modification of a classical trust region framework. A very similar method has been analyzed in \cite{larson2013stochastic}, under more restrictive conditions on $\tilde f(x,\xi)$. We believe that our convergence rate analysis framework can be easily applied to that method as well, but we choose to focus on STORM. 

STORM uses adaptive trust region radii and is close to what is known to be efficient in practice, hence here we focus  on the theoretical analysis of this method in the first and second order settings. We recover the convergence rates whose dependence on $\epsilon$ is the same as of those for deterministic trust region method. Since the method is stochastic our convergence rates are derived in the form of the bound on the expected number of iterations the algorithms takes until achieving the $\epsilon$-accuracy. In contrast, convergence rate result for SGD in \cite{GhadimiLan}, for example, only bounds the expected  sum of the norms of all the gradients up to iteration $T$, as a function of $T$. Other weaker types of convergence rates are established in \cite{Mahoney} and \cite{Tripuraneni2017}. In \cite{Mahoney} a trust region and a cubic regularization 
methods based on sampled Hessian are considered. The number of samples is selected in such a way that the error in the Hessian approximation is smaller than $\epsilon$ with large probability $p$. Then the deterministic convergence rate can be established under the assumption that the condition on the Hessian approximation  holds at each iteration until $\epsilon$-accuracy is reached. Hence, the  established  bound on the number of iterations $T$ 
holds with probability $p^T$ and with probability $1-p^T$ no bound is known. 
The same type of complexity result is derived in  \cite{Tripuraneni2017} for a cubic regularization method, where  gradients and Hessians are also sampled at a rate dictated by $\epsilon$ and the resulting bound holds only with some probability. 

Algorithms in \cite{Mahoney} have some similarities with algorithms analyzed in 
\cite{CartisScheinberg2014} and \cite{GrattonEtAl2017}.   In  \cite{CartisScheinberg2014} and \cite{GrattonEtAl2017}, the global rates of convergence of a trust region, a line search and an adaptive cubic regularization methods are analyzed under the assumption that first and second order information is inexact, but sufficiently accurate with some probability, however, the analysis in all of these papers relies heavily on the assumption that function values are computed accurately, in particular that no increasing steps are allowed.  This implies that the results in  \cite{CartisScheinberg2014} and \cite{GrattonEtAl2017} cannot be applied 
in a stochastic setting. \cite{Tripuraneni2017}, on the other hand, does not explicitly use function values, because it does not utilize adaptive step sizes. 
This paper can be seen as an extension of  \cite{CartisScheinberg2014} and \cite{GrattonEtAl2017} to the case of stochastic functions.


Unlike what is done in most of the literature on stochastic methods,  we do not make the assumption that  
the function, gradient or Hessian estimates are unbiased. 
Instead it is assumed  that at each iteration, the function values $f(x)$, the gradient $\nabla f(x)$ and possibly the Hessian   $\nabla^2 f(x)$  can be approximated up to sufficient accuracy with a fixed, but sufficiently high probability $p$, conditioned on the past. This assumption, which we will make formal later, is very general and does not explicitly specify how such approximations can be obtained. In case when unbiased estimators are available, one can utilize sampling techniques described, for example in 
\cite{Mahoney, ChenMenickellyScheinberg2014}. On the other hand,  in \cite{ChenMenickellyScheinberg2014} that are examples of $\tilde f(x,\xi)$ which is a biased estimator of $f(x)$, which is arbitrarily erroneous with some small fixed probability, and yet, the required approximations can be constructed and the trust region method converges. Note that while our condition on the approximations hold only with probability $p$, we provide complexity result in expectation, thus accounting for "failed" approximations.

The goal of our paper is twofold: First, we introduce a novel framework for bounding expected complexity of a stochastic optimization method. This framework is based on defining a renewal-reward  process associated with the algorithm as well as  its stopping time, which is the time when the algorithm reaches desired accuracy. Then, under certain assumptions, we derive a bound on the expected stopping time. This framework, in principal, can be used for analysis of convergence rates of a variety of algorithms - for instance it applies to all algorithms in  \cite{CartisScheinberg2014} and \cite{GrattonEtAl2017}.  
In recent work \cite{PaquetteScheinberg2018} it has been applied to analyze a stochastic line-search method. 
In this paper, specifically, we  use this general framework to derive a bound on the  convergence rate of the  STORM algorithm defined  in \cite{ChenMenickellyScheinberg2014}, by proving that  these assumptions are satisfied by this algorithm. 
In particular, we  show that the expected number of iterations required to achieve
$\|\nabla f(x)\|\leq \epsilon$ is bounded by $O(\epsilon^{-2}/(2p-1))$, which is an improvement on the result in \cite{GhadimiLan} and a similar one to those in \cite{nonconvexSVRG,nguyen2017sarah_nonconvex}, in terms of dependence on $\epsilon$, but such that, in principal, it never requires computation of the true gradient. The result is a natural extension of the standard, 
best-known worst-case complexity of any first order method for nonconvex optimization \cite{Nesterov}. 
In this paper we also make a significant improvement upon the results in \cite{ChenMenickellyScheinberg2014} by relaxing a very restrictive condition on the size of the steps taken by the algorithm. 
By applying the general analytic framework again, we  also provide a second order complexity analysis. We show that a second order
STORM variant takes an expected number of iterations that is at most
 $O(\epsilon^{-3}/(2p-1))$ to ensure
$\max\{\|\nabla f(x)\|,-\lambda_{\min}(\nabla^2 f(x))\}\leq \epsilon$; this result requires slightly stronger assumptions on the
function estimates but provides generalization of results in \cite{Mahoney,GrattonEtAl2017} to the stochastic case. 

Our main complexity results does not yet provide a termination criterion that would guarantee that $\|f(\bar x)\|\leq \epsilon$, where $\bar x$ is the last iterate. However, the analysis provides a foundation for establishing such a criterion. In particular, while in this paper we simply bound the expected complexity, bounding the tail of the complexity distribution will follow from the analysis here.  

In the next section we present and analyze our generic framework, while the  STORM algorithm is analyzed in Section \ref{sec:firstorder}. 

The rest of the paper is organized as follows: we begin by introducing the stochastic framework and deriving the bound on its expected stopping  time in Section \ref{sec:walds}. In Section \ref{sec:firstorder} we provide the first order complexity analysis of the STORM algorithm by showing that it fits into the framework introduced  in Section \ref{sec:walds}. The second order complexity analysis follows in  Section \ref{section:second-order}.
\paragraph{Notation} 
Throughout the paper we use $\|\cdot\|$ to denote the Euclidian norm. Some of the bounding constants that are used are denoted by $\kappa$ with a subscript that is meant to indicated the objects that the constant bounds.  In particular we use the following constants. 

\begin{equation*}
    \begin{aligned}
\kappa_{ef} & \mbox{\quad``error in the function value",}\\
\kappa_{eg} &  \mbox{\quad``error in the gradient",}\\
\kappa_{fcd} & \mbox{\quad``fraction of Cauchy decrease",}\\
\kappa_{bhm}&\mbox{\quad``bound on the Hessian of the models",}\\
     \end{aligned}
\end{equation*}
We will also use $I\left( A\right )$ to denote the indicator of a random event $A$ occurring. 
\section{A Renewal-Reward Martingale Process}

\label{sec:walds}

In this section we consider a general random process and a stopping time $T$,
which posses certain properties. We analyze the behavior of this random
process and derive a bound on the expected stopping time. These results will
be used later in the paper in the specific setting of convergence 
of a stochastic trust region method to first order stationary points. We argue that the framework presented in 
this section can be used for convergence analysis of a variety of stochastic algorithms.
We start by defining a stopping time of a discrete time stochastic process.

\begin{definition}
\label{def:stoptime} Given a stochastic process $\{X_{k}\}=\{X_{k}:k\geq0\}$,
we say that $T$ is a stopping time with respect to $\{X_{k}\}$ if for each
$m\geq0$ the occurrance of the event $\{T=m\}$ is determined by observing
$X_{1},\dots,X_{m}$. That is, $\{T=m\}\in\sigma\left(  X_{0},...,X_{m}\right)
$, the $\sigma$-field generated by $X_{1},...,X_{m}$, for each $m\geq0$.
\end{definition}

Now, let $\{\left(  \Phi_{k},\Delta_{k}\right)  \}$ be a random process such
that $\Phi_{k}\in\lbrack0,\infty)$ and $\Delta_{k}\in\lbrack0,\infty)$ for
$k\geq0$. Let $V_{k+1}=\Phi_{k+1}-\Phi_{k}$ for $k\geq0$. We also assume the
existence of a sequence $\left\{  W_{k}\right\}  _{k=1}^{\infty}$, defined on
the same probability space as $\{\left(  \Phi_{k},\Delta_{k}\right)  \}$, we
introduce $W_{0}=1$ and let $\mathcal{F}_{k}$ denote the $\sigma$-algebra
generated by $\{\left(  \Phi_{0},\Delta_{0},W_{0}\right)  ,\cdots,\left(
\Phi_{k},\Delta_{k},W_{k}\right)  \}$. We assume that $\left\{  W_{k}\right\}
_{k=1}^{\infty}$ satisfies
\begin{equation}%
\begin{array}
[c]{rcl}%
P(W_{k+1}=1|\mathcal{F}_{k}) & = & p,\\
P(W_{k+1}=-1|\mathcal{F}_{k}) & = & 1-p.
\end{array}
\label{w_process}%
\end{equation}
Note that under the assumption (\ref{w_process}) the $W_{k}$'s are independent
and also independent of the sequence $\left\{  \left(  \Phi_{k},\Delta
_{k}\right)  \right\}  $. 

Let $\left\{  T_{\epsilon}\right\}  _{\epsilon>0}$ be a family of stopping
times with respect to $\left\{  \mathcal{F}_{k}\right\}  _{k\geq0}$,
parametrized by some quantity $\epsilon>0$. The following assumptions will be
imposed on $\{\left(  \Phi_{k},\Delta_{k}\right)  \}$ and $T_{\epsilon}$.

\begin{assumption}
\label{ass:stoch-proc} \ \newline

\begin{enumerate}
\item[(i)] There exists  constants $\lambda\in\left(  0,\infty\right)  $ and
$\Delta_{\max}=\Delta_{0}e^{  \lambda j_{\max}}  $ (for some
$j_{\max}\in\mathbb{Z}$) 
such that $\Delta_{k}\leq\Delta_{max}$ for all $k$.

\item[(ii)] There exists a constant 
$\Delta_{\epsilon}=\Delta_{0}e^{  \lambda j_{\epsilon}}  $ (for some
$j_{\epsilon}\in\mathbb{Z}$, $j_{\epsilon}\leq 0$) such that, the following holds for each $k\geq
0$,
\begin{equation}
I\left(  T_{\epsilon}>k\right) \Delta_{k+1} \geq I\left(  T_{\epsilon
}>k\right)  \min(\Delta_{k}e^{\lambda W_{k+1}},\Delta_{\epsilon}),
\label{deltaprocess}%
\end{equation}
where $W_{k+1}$ satisfies \eqref{w_process} with $p>\frac{1}{2}$.

\item[(iii)] There exists a nondecreasing function $h(\cdot):[0,\infty
)\rightarrow(0,\infty)$ and a constant $\Theta>0$ such that
\begin{equation}
{\mathbb{E}}(V_{k+1}|\mathcal{F}_{k})I\left(  T_{\epsilon}>k\right)
\leq-\Theta h(\Delta_{k})I\left(  T_{\epsilon}>k\right)  \label{vmartingale}%
\end{equation}
or, equivalently,
\begin{equation}
{\mathbb{E}}(\Phi_{k+1}|\mathcal{F}_{k})I\left(  T_{\epsilon}>k\right)
\leq\Phi_{k}I\left(  T_{\epsilon}>k\right)  -\Theta h(\Delta_{k})I\left(
T_{\epsilon}>k\right)  . \label{phisubmartingale}%
\end{equation}

\end{enumerate}
\end{assumption}

In summary, Assumption \ref{ass:stoch-proc} states that the nonnegative
stochastic process $\Phi_{k}$ gets reduced by at least $\Theta h(\Delta_{k})$
at each step, as long as $T_{\epsilon}>k$. Also, $\Delta_{k}$ tends to
increase whenever it is smaller than some threshold $\Delta_{\epsilon}$. Our
goal is to bound ${\mathbb{E}}(T_{\epsilon})$ in terms of $h(\Delta_{\epsilon
})$. What we will show in this section is that, on average, $\Delta_{k}%
\geq\Delta_{\epsilon}$ frequently, and hence, ${\mathbb{E}}(\Phi_{k+1}-\Phi_{k})$ can be
bounded by a negative fixed value (dependent on $\epsilon$), sufficiently
frequently, which will allow us to apply Wald's identity (stated below) and derive the bound on
${\mathbb{E}}(T_{\epsilon})$. In order to formalize this, we introduce a
renewal process, where renewals occur at times when $\Delta_{k}\geq
\Delta_{\epsilon}$ and we consider the sum of $V_{j}$'s obtained between two renewals.

In order to define this renewal process we first introduce an auxiliary
process. Define $\left\{  Z_{k}\right\}  _{k=0}^{\infty}$ as follows. First,
let $Z_{0}=j_{\epsilon}$ and set
\[
Z_{k+1}=\min(Z_{k}+W_{k+1},j_{\epsilon}).
\]
Note that the process $\left\{  Z_{k}\right\}  _{k=0}^{\infty}$ is a
birth-death process on the set $\left\{  k:k\leq j_{\epsilon}\right\}  $.
Then, define the renewal process, $A_{0}=0$ and $A_{n}=\inf\{m>A_{n-1}%
:Z_{m}=j_{\epsilon}\}$. By Assumption (\ref{deltaprocess}) and using a simple
inductive argument for the second inequality below we have that
\[
I\left(  T_{\epsilon}>k\right)\Delta_{k+1} \geq I\left(  T_{\epsilon
}>k\right)  \min(\Delta_{k}e^{\lambda W_{k+1}},\Delta_{\epsilon})\geq I\left(
T_{\epsilon}>k\right)  \Delta_{0}\exp\left(  \lambda Z_{k+1}\right)  .
\]

In other words, on $T_{\epsilon}>k$, the process $A_{n}$ only counts the
iterations for which $\Delta_{k}$ has value at least $\Delta_{\epsilon}$. The
interarrival times of this renewal process are defined for all $k\geq1$ by
\[
\tau_{n}=A_{n}-A_{n-1},
\]
As a final piece of notation, we define the counting process
\[
N(k)=\max\{n:A_{n}\leq k\},
\]
which is the number of renewals that occur before time $k$.

First, we have a lemma which relies on the simple structure of the process
$\{W_{k}\}$ to bound ${\mathbb{E}}[\tau_{n}]$.

\begin{lemma}
\label{tau_1_bound} Let $\tau_{n}$ be defined as above. Then, for all $n$
\[
{\mathbb{E}}[\tau_{n}]=p+\left(  1+\frac{1}{2p-1}\right)  \left(  1-p\right)
=p/(2p-1).
\]

\end{lemma}

\begin{proof}
Define the process $\bar{Z}_{k+1}=\bar{Z}_{k}+W_{k+1}$, which is a simple
random walk. Suppose that $\bar{Z}_{0}=-1$ and define $\bar{\tau}=\inf
\{n\geq0:\bar{Z}_{n}=0\}$ it is well known (in fact, this follows by Wald's
identity) that%
\[
{\mathbb{E}}\left(  \bar{\tau}\right)  =\frac{1}{2p-1}.
\]
On the other hand, by first step analysis (i.e. conditioning on $W_{1}$) we
have that
\[
{\mathbb{E}}[\tau_{1}]=1\cdot P\left(  W_{1}=1\right)  +(1+{\mathbb{E}}%
[\bar{\tau}])P\left(  W_{1}=-1\right)  .
\]
The above identity follows because the distribution of $\tau_{1}$
conditioned on $Z_{1}=j_{\epsilon}-1$ is the same as the distribution of
$\bar{\tau}$. So, we conclude that%
\[
{\mathbb{E}}[\tau_{1}]=p+\left(  1+\frac{1}{2p-1}\right)  \left(  1-p\right)
,
\]
the last equality follows by simplifying the expression above.
\end{proof}

We now bound the number of renewals that can occur before the time
$T_{\epsilon}$.

\begin{lemma}
\label{n_t_eps_bound}
\[
{\mathbb{E}}(N(T_{\epsilon}-1)+1)\leq\frac{\Phi_{0}}{\Theta h(\Delta
_{\epsilon})}.
\]

\end{lemma}

\begin{proof}
For ease of notation, let $k\wedge T_{\epsilon}=\min\{k,T_{\epsilon}\}$.
Consider the stochastic process defined via $R_{0}=\Phi_{0}$ and
\[
R_{k}=\Phi_{k\wedge T_{\epsilon}}+\Theta\sum_{j=0}^{\left(  k\wedge
T_{\epsilon}\right)  -1}h(\Delta_{j}),
\]
for $k\geq1$, where $\Theta$ is defined in \eqref{phisubmartingale}. Observe
that $R_{k}$ is a non-negative supermartingale with respect to $\left\{
\mathcal{F}_{k}\right\}  $, to see this we first write%
\[
{\mathbb{E}}[R_{k+1}|\mathcal{F}_{k}]={\mathbb{E}}[R_{k+1}I\left(
T_{\epsilon}>k\right)  |\mathcal{F}_{k}]+{\mathbb{E}}[R_{k+1}I\left(
T_{\epsilon}\leq k\right)  |\mathcal{F}_{k}].
\]
Then,
\begin{align}
{\mathbb{E}}[R_{k+1}I\left(  T_{\epsilon}\leq k\right)  |\mathcal{F}_{k}]  &
={\mathbb{E}}[R_{k+1}I\left(  T_{\epsilon}\leq k\right)  |\mathcal{F}%
_{k}]\nonumber\\
&  ={\mathbb{E}}\left[  \left(  \Phi_{T_{\epsilon}}+\Theta\sum_{j=0}%
^{T_{\epsilon}-1}h(\Delta_{j})\right)  I\left(  T_{\epsilon}\leq k\right)
|\mathcal{F}_{k}\right] \nonumber\\
&  =\Phi_{T_{\epsilon}}I\left(  T_{\epsilon}\leq k\right)  +\Theta\sum
_{j=0}^{T_{\epsilon}-1}h(\Delta_{j})I\left(  T_{\epsilon}\leq k\right)  ,
\label{B1}%
\end{align}
where the last equality follows because $T_{\epsilon}$ is a stopping time and
therefore the random variable the expectation is $\mathcal{F}_{k}$-measurable.

On the other hand, since $\left\{  T_{\epsilon}\geq k+1\right\}  =\left\{
T_{\epsilon}>k\right\}  =\left\{  T_{\epsilon}\leq k\right\}  ^{c}%
\in\mathcal{F}_{k}$ we conclude, using \eqref{phisubmartingale}, that
\begin{align}
&  {\mathbb{E}}[R_{k+1}I\left(  T_{\epsilon}>k\right)  |\mathcal{F}%
_{k}]  ={\mathbb{E}}[R_{k+1}|\mathcal{F}_{k}]I\left(  T_{\epsilon}>k\right)
\nonumber\\
&  ={\mathbb{E}}[\Phi_{k+1}|\mathcal{F}_{k}]I\left(  T_{\epsilon}>k\right)
+{\mathbb{E}}\left[  \Theta\sum_{j=0}^{k}h(\Delta_{j})|\mathcal{F}_{k}\right]
I\left(  T_{\epsilon}>k\right) \nonumber\\
&  \leq\left(  \Phi_{k}-\Theta h(\Delta_{k})+\Theta\sum_{j=0}^{k}h(\Delta
_{j})\right)  I\left(  T_{\epsilon}>k\right) \nonumber\\
&  =\left(  \Phi_{k}+\Theta\sum_{j=0}^{k-1}h(\Delta_{j})\right)  I\left(
T_{\epsilon}>k\right)  . \label{B2}%
\end{align}
Combining (\ref{B1}) and (\ref{B2}) we conclude that%
\[
{\mathbb{E}}[R_{k+1}|\mathcal{F}_{k}]\leq R_{k}%
\]
as claimed. We then conclude, since $\Phi_{k}\geq0$ for each $k\geq0$, that%
\[
\Theta{\mathbb{E}}\left(  \sum_{j=0}^{\left(  k\wedge T_{\epsilon}\right)
-1}h(\Delta_{j})\right)  ={\mathbb{E}}[R_{k}]\leq{\mathbb{E}}[R_{0}]=\Phi
_{0}.
\]
Now, since $h(\cdot)\geq0$, observe that
\[
0\leq\sum_{j=0}^{\left(  k\wedge T_{\epsilon}\right)  -1}h(\Delta_{j}%
)\nearrow\sum_{j=0}^{T_{\epsilon}-1}h(\Delta_{j})
\]
as $k\rightarrow\infty$, note that this conclusion holds also on the event
$\{T_{\epsilon}=\infty\}$. Therefore, by the Monotone Convergence Theorem
\begin{equation}
\Theta{\mathbb{E}}\left(  \sum_{j=0}^{T_{\epsilon}-1}h(\Delta_{j})\right)
=\lim_{k\rightarrow\infty}\Theta{\mathbb{E}}\left(  \sum_{j=0}^{\left(
k\wedge T_{\epsilon}\right)  -1}h(\Delta_{j})\right)  \leq{\mathbb{E}}%
[R_{0}]=\Phi_{0}. \label{thm1}%
\end{equation}
Now, by the definition of the counting process $N(\cdot)$, since the renewal
times $A_{n}$ when $\Delta_{A_{n}}\geq\Delta_{\epsilon}$, are a subset of the
iterations $0,1,\dots,T_{\epsilon}$, and since $h(\cdot)$ is nondecreasing, we
have
\[
\Theta\sum_{j=0}^{T_{\epsilon}-1}h(\Delta_{j})\geq\Theta\sum_{j=0}%
^{T_{\epsilon}-1}h(\Delta_{j})I\left(  j\in\left\{  A_{i}\right\}
_{i=1}^{\infty}\right)  =\Theta\left(  N(T_{\epsilon}-1)+1\right)
h(\Delta_{\epsilon}),
\]
the term +1 being added to $N(T_{\epsilon}-1)$ comes from the fact that
$A_{0}=0$. Inserting this in \eqref{thm1},
\[
{\mathbb{E}}(\left(  N(T_{\epsilon}-1)+1\right)  )\leq\frac{\Phi_{0}}{\Theta
h(\Delta_{\epsilon})},
\]
which concludes the proof.
\end{proof}

We now state and prove a well known theorem on  expected stopping time, known as 
the Wald's Identity (non-negative increments case) (e.g., see Theorem 2.2.4 in \cite{Alsmeyer10}).  The reason we provide a proof here, is that 
in the literature this result is typically shown under the assumption that the stopping time is finite a.s. 
Dropping this condition is particularly important in our framework, as this condition is equivalent to a 
convergence result for the optimization algorithm which generates the stochastic process. 
It is convenient and useful not to have to prove the convergence result before establishing the convergence rates bounds, since the
convergence immediately follows from these bounds. 

\begin{theorem} {\textbf{Wald's Identity (non-negative increments).}}
Suppose that $\left\{ Y_{i}\right\} _{i=1}^{n}$ is a sequence of independent
random variables such that $Y_{i}\in \lbrack 0,\infty ]$ with probability
one. Define $E\left( Y_{i}\right) =\mu _{i}\in \lbrack 0,\infty ]$ and let $%
N\in \lbrack 0,\infty ]$ be a stopping time with respect to the filtration
generated by the $Y_{n}$'s. Define $S_{n}=Y_{1}+...+Y_{n}$, $S_{0}=0$, $%
s_{n}=\mu _{1}+...+\mu _{n}$ and $s_{0}=0$. Then 
\[
E\left( S_{N}\right) =E\left( s_{N}\right) . 
\]
\end{theorem}

\begin{proof}
Let $m>0$ be an arbitrary integer and define $Y_{i}\left( m\right) =\min
\left( Y_{i},m\right) $, $N_{m}=\min \left( N,m\right) $, $\mu _{i}\left(
m\right) =E\left( Y_{i}\left( m\right) \right) $, $S_{n}\left( m\right)
=Y_{1}\left( m\right) +...+Y_{n}\left( m\right) $ and $s_{n}\left( m\right)
=\mu _{1}\left( m\right) +...+\mu _{n}\left( m\right) $. Note that all of
these quantities are non-negative and non-decreasing in $m$. By the optional
sampling theorem applied to the martingale $M_{n}=S_{n}\left( m\right)
-s_{n}\left( m\right) $, we have that%
\[
E\left( S_{N_{m}}\left( m\right) \right) =E\left( \mu _{1}\left( m\right)
+...+\mu _{N_{m}}\left( m\right) \right) .
\]%
Because of monotonicity, 
\[
S_{N_{m}}\left( m\right) \nearrow S_{N}
\]%
as $m\rightarrow \infty $. For the case $N=\infty $, we interpret $%
S_{N}=\sup_{n\geq 0}\sup_{m}S_{n}\left( m\right) $. Similarly, 
\[
s_{N_{m}}\left( m\right) \nearrow s_{N},
\]%
as $m\rightarrow \infty $. By the monotone convergence theorem we then
conclude that%
\[
E\left( S_{N}\right) =E\left( s_{N}\right) .
\]
\end{proof}

\bigskip

\begin{remark} If $\mu _{i}=\mu $, then $E\left( S_{N}\right) =\mu \cdot E\left(
N\right) $. If $\mu =0$, then $X_{i}=0$ almost surely and $S_{N}=0$.
Therefore, if $\mu =0$, we interpret $\mu \cdot E\left( N\right) =0$, even
if $E\left( N\right) =\infty $. This interpretation is consistent with the
case in which $N=0$ almost surely as well, in this case $0=\mu \cdot E\left(
N\right) =E\left( S_{N}\right) $, even $\mu =\infty $.
\end{remark}

%

We now apply this
theorem to $S_n=A_{n}=\sum_{i=0}^{n}\tau_{i}$ and obtain the main result of this
section, which will be used in the following sections to establish the main
complexity result.

\begin{theorem}
Let Assumption \ref{ass:stoch-proc} hold. Then \label{t_eps_bound}
\[
{\mathbb{E}}[T_{\epsilon}]\leq\frac{p}{2p-1}\cdot\frac{\Phi_{0}}{\Theta
h(\Delta_{\epsilon})}+1.
\]

\end{theorem}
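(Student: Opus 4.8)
The plan is to assemble the theorem from Lemmas~\ref{tau_1_bound} and~\ref{n_t_eps_bound} by applying Wald's inequality to the renewal sequence $A_n=\sum_{i=1}^n\tau_i$. The starting observation is elementary: directly from the definition $N(k)=\max\{n:A_n\le k\}$ one has $A_{N(T_\epsilon)}\le T_\epsilon<A_{N(T_\epsilon)+1}$, and hence
\[
T_\epsilon\ \le\ A_{N(T_\epsilon)+1}\ =\ \sum_{i=1}^{N(T_\epsilon)+1}\tau_i .
\]
So it suffices to bound the expectation of this randomly indexed sum.

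Next I would check the hypotheses needed to apply Wald's inequality with summands $Y_i=\tau_i$ and stopping time $N:=N(T_\epsilon)+1$. First, $N$ has finite mean: Lemma~\ref{n_t_eps_bound} gives $\E[N]=\E[N(T_\epsilon)]+1\le \Phi_0/(\Theta h(\Delta_\epsilon))+h(\Delta_0)/h(\Delta_\epsilon)+1$, which is finite because $h(\Delta_\epsilon)>0$ and $\Phi_0,h(\Delta_0)$ are finite; in particular $T_\epsilon<\infty$ almost surely forces $N<\infty$ almost surely. Second, $N$ is a stopping time for the sequence of partial sums $\{A_k\}$: the event $\{N\le k\}=\{A_k>T_\epsilon\}$ is determined by the history of the process up through the $k$-th renewal epoch $A_k$, since $A_k$ and the $\Delta$-values up to time $A_k$ are so determined and, $T_\epsilon$ being a stopping time for $\{\Phi_k,\Delta_k\}$, the event $\{T_\epsilon<A_k\}$ is as well. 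Third, Lemma~\ref{tau_1_bound} supplies the uniform bound $\E[\tau_n]\le p/(2p-1)=:\mathcal Y$. With these in hand, Wald's inequality yields
\[
\E[T_\epsilon]\ \le\ \E\!\left[\sum_{i=1}^{N}\tau_i\right]\ \le\ \mathcal Y\,\E[N]\ =\ \frac{p}{2p-1}\bigl(\E[N(T_\epsilon)]+1\bigr),
\]
and substituting the bound on $\E[N(T_\epsilon)]$ from Lemma~\ref{n_t_eps_bound} gives exactly the claimed estimate.

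The step I expect to require the most care is this invocation of Wald's inequality. As quoted above it is stated for \emph{independent} summands, whereas Lemma~\ref{tau_1_bound} controls only the marginal means $\E[\tau_n]$; moreover $T_\epsilon$, and hence the index $N$, depends on the full process $\{\Phi_k,\Delta_k\}$ rather than merely on the driving variables $\{W_k\}$. Making the argument rigorous calls either for exploiting the regenerative structure of the $\Delta$-process at the renewal epochs $A_n$ (so that the post-renewal interarrival times are i.i.d., or at least satisfy $\E[\tau_n\mid\mathcal F_{A_{n-1}}]\le p/(2p-1)$ with $\{N\ge n\}\in\mathcal F_{A_{n-1}}$), or for appealing to the slightly more general form of Wald's inequality that needs only such a conditional-mean bound together with predictability of $\{N\ge n\}$. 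The remainder of the proof is routine bookkeeping.
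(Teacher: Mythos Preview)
Your proposal is correct and follows essentially the same route as the paper: bound $T_\epsilon$ by $A_{N(T_\epsilon)+1}$, observe that $N(T_\epsilon)+1$ is a stopping time for the renewal sequence, apply Wald's inequality with the uniform bound from Lemma~\ref{tau_1_bound}, and finish with Lemma~\ref{n_t_eps_bound}. If anything, you are more careful than the paper itself: the paper simply asserts that the interarrival times $\tau_n$ are independent and applies Wald directly, whereas you correctly flag that this independence (and the predictability of $\{N\ge n\}$ relative to the $\tau$'s) deserves justification via the regenerative structure of the $\Delta$-process at the renewal epochs.
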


\begin{proof}
Define $\mathcal{G}_{n}=\mathcal{F}_{A_{n}}$, that is,
\[
\mathcal{G}_{n}=\{A\in\sigma\left(  \cup_{m=0}^{\infty}\mathcal{F}_{m}\right)
:A\cap\{A_{n}\leq k\}\in\mathcal{F}_{k}\text{ for all }k\}.
\]
Note that $A_{n}$ is a stopping time with respect to $\left\{  \mathcal{F}%
_{n}\right\}  _{n\geq0}$, so $\mathcal{G}_{n}$ is well defined. We claim that
$N\left(  T_{\epsilon}-1\right)  +1$ is a stopping time with respect to
$\left\{  \mathcal{G}_{n}\right\}  _{n\geq0}$. To see this, note, since
$N\left(  k\right)  \leq k$
\begin{align*}
&  \left\{  N\left(  T_{\epsilon}-1\right)  +1\leq n\right\} \\
&  =\cup_{k=0}^{n-1}\left\{  N\left(  k\right)  \leq n-1,T_{\epsilon
}-1=k\right\} \\
&  =\cup_{k=0}^{n-1}\left\{  N\left(  k\right)  +1\leq n,T_{\epsilon
}=k+1\right\}  \subseteq\mathcal{F}_{A_{n}},
\end{align*}
where the last inclusion follows because $N\left(  k\right)  +1$ is a stopping
time with respect to $\left\{  \mathcal{F}_{A_{n}}\right\}  _{n\geq0}$ and
because $A_{n}\geq n$, so $\mathcal{F}_{n}\subseteq\mathcal{F}_{A_{n}}$, which
implies that $T_{\epsilon}$ is also stopping time with respect to $\left\{
\mathcal{G}_{n}\right\}  _{n\geq0}$.

Now, because of the independence assumption implied by (\ref{w_process}) we
have that%
\[
{\mathbb{E}}[\tau_{n+1}|\mathcal{G}_{n}]={\mathbb{E}}[\tau_{n+1}]=\frac{p}{2p-1}.
\]
Recalling that $A_{N(T_{\epsilon}-1)+1}= \sum_{k=1}^{N(T_{\epsilon}-1)+1}\tau_k$,
we can invoke Wald's identity to conclude that
\[
{\mathbb{E}}[A_{N(T_{\epsilon}-1)+1}]=\frac{p}{2p-1}{\mathbb{E}}%
[N(T_{\epsilon}-1)+1].
\]
Since $A_{N(T_{\epsilon}-1)+1}\geq T_{\epsilon}-1$, we have by Lemmas
\ref{tau_1_bound} and \ref{n_t_eps_bound}
\[
{\mathbb{E}}[T_{\epsilon}-1]\leq{\mathbb{E}}[\tau_{1}]{\mathbb{E}%
}[N(T_{\epsilon}-1)+1]\leq\frac{p}{2p-1}\left(  \frac{\Phi_{0}}{\Theta
h(\Delta_{\epsilon})}\right)  .
\]
The statement of the theorem follows from the last inequality.
\end{proof}

\section{The first order STORM algorithm}\label{sec:firstorder}
We now  state  and analyze a stochastic  trust region (TR) algorithm (Algorithm 1) which is essentially very similar to its deterministic counterpart \cite{TRBook}.
This method uses the inexact (noisy) information about $f$ and its derivatives, just as the deterministic method uses the exact information. 
This algorithm, as stated, and the assumptions on its steps that we will impose below aim at convergence to a first order stationary point. In this section we will analyze the global rate of convergence of this algorithm to such a point
(while in Section \ref{section:second-order}, we extend Algorithm \ref{algo:stodfosimple} to calculate second order critical
points).
\begin{algorithm}
\caption{{\sc Stochastic DFO with Random Models,  \cite{ChenMenickellyScheinberg2014}}}
\label{algo:stodfosimple}
 \begin{algorithmic}[1]
    \STATE (Initialization):  Choose  constants $\gamma>1$, $\eta_1\in(0,1)$, $\eta_2 > 0$. Choose an initial point $x^0$ and an initial trust-region radius $\delta_0>0$ and the maximum radius $\delta_{\max}=\gamma^{j_{\max}}\delta_0$ for some $j_{\max}\geq 0$. Set $k \gets 0$.
\STATE\label{step.model} (Model construction): Build a (random) model $m_k(x_k+s)=f_k+g_k^\top s+\frac{1}{2}s^\top H_k s$ that approximates $f(x)$ in the ball $B(x_k, \delta_k)$ with $s=x-x_k$.
\STATE (Step calculation) Compute $s_k = \arg \underset{s: \| s \|\leq \delta_k}{\min}  m_k(s) $ (approximately) so that it satisfies condition \eqref{eqn:CS}. 
\STATE (Estimates calculation) Obtain estimates $f_k^0$ and $f_k^s $ of $f(x_k)$ and $f(x_k+s_k)$, respectively.
\STATE (Acceptance of the trial point): Compute $\rho_k = \dfrac{f_k^0-f_k^s}{m_k(x_k)-m_k(x_k+s_k)}.$\\ If $\rho_k\geq \eta_1$ and $ \| g_k\| \geq \eta_2 \delta_k $, set $x_{k+1} =x_k+s_k$; otherwise, set $x_{k+1}=x_k$.
\STATE (Trust-region  radius update): If $\rho_k\geq \eta_1$ and $ \| g_k\| \geq\eta_2 \delta_k $, set $\delta_{k+1} =\min\{  \gamma \delta_k,\delta_{\max}\}$; otherwise $\delta_{k+1}=\gamma^{-1} \delta_k$; $k \gets k+1$ and go to step \ref{step.model}.
  \end{algorithmic}
\end{algorithm}

For every $k$, the  step $s_k$ is computed so that the well-known {\em Cauchy decrease} condition is satisfied,
\begin{eqnarray}
m_k(x_k)-m_k(x_k+s_k) \ge \dfrac{\kappa_{fcd}}{2} \| g_k \| \min \left\{ \dfrac{\|g_k\|}{\|H_k\|},\d_k  \right\}  \label{eqn:CS}
\end{eqnarray}
for some constant $\kappa_{fcd}\in(0,1].$ This condition is standard for the TR methods, easy to enforce in practice and is discussed in detail in the literature \cite{TRBook,NoceWrig06}. Iterations on which $x_{k+1} =x_k+s_k$ occurs are called {\em successful}. 

Algorithm \ref{algo:stodfosimple} generates a random process. The source of randomness are the random models and random estimates constructed on each iteration, based on some random information obtained from the  stochastic function $f(x,\varepsilon)$. $M_k$  will denote a random model in the $k$-th iteration, while we will use the notation $m_k=M_k(\omega)$ for its realizations. As a consequence of using  random models, the iterates $X_k$, the trust-region radii $\Delta_k$ and the steps $S_k$  are also random quantities, and so $x_k=X_k(\omega)$, $\d_k = \Delta_k(\omega)$, $s_k=S_k(\omega)$ will denote their respective realizations.  Similarly,
 let random quantities  $\{F_k^0,F_k^s\}$ denote the estimates of $f(X_k)$ and $f(X_k+S_k)$, with their realizations denoted by $f_k^0=F_k^0(\omega)$ and $f_k^s=F_k^s(\omega)$.  
 In other words, Algorithm \ref{algo:stodfosimple} results in a stochastic process $\{M_k,X_k,S_k, \Delta_k, F_k^0, F_k^s\}$. 
 Our goal is to show that under certain conditions on the sequences $\{M_k\}$
and $\{F_k\}=\{(F_k^0, F_k^s)\}$ the resulting stochastic process has desirable convergence rate. In particular, we will 
assume that  models $M_k$
and estimates  $F_k^0, F_k^s$ are sufficiently accurate with sufficiently high probability, conditioned on the past. 

The key to the analysis lies in the assumption that the accuracy improves in coordination with the perceived progress of the algorithm. 
The main challenge of the analysis lies in the fact that, while in the deterministic case the function $f(x)$ never increases from one iteration to another, this can easily happen in the stochastic case. The analysis is based on properties of supermartingales where the increments of a supermartingale depend on the function change between iterates (which as we will show, {\em tend} to decrease). To make the analysis simpler we need a technical assumption that these increments are bounded from above. 
Hence, overall we make the following assumptions on $f$:
\begin{assumption}\label{ass:F}
We assume that all iterates $x_k$ generated by Algorithm \ref{algo:stodfosimple} 
the  gradient $\nabla f$ is $L$-Lipschitz continuous 
and
\[  f(x)\geq 0 
\]
\end{assumption}

The assumptions of Lipschitz continuity of $\nabla f$ and boundedness of $f$ from below are standard. Here for simplicity and w.l.o.g. we assume that the lower bound on $f$ is nonnegative. 

\subsection{Assumptions on the first order STORM algorithm}
Let $\mathcal{F}_{k-1}^{M\cdot F}$ denote the $\sigma$-algebra generated by $M_0,\cdots, M_{k-1}$ and $F_0,\cdots,F_{k-1}$ and let $\mathcal{F}_{k-{1}/{2}}^{M\cdot F}$ denote the $\sigma$-algebra generated by $M_0,\cdots, M_{k}$ and $F_0,\cdots,F_{k-1}$. 

\begin{definition}
1) A function $m_k$ is a \emph{$\kappa$-fully linear model of $f$ on $B(x_k,\d_k)$} provided, for $ \kappa = (\kappa_{ef}, \kappa_{eg})$ and  $\forall y \in B(x_k,\delta_k)$,
\begin{eqnarray}\label{eq:fl-model}
\| \nabla f(x_k) - g_k  \| & \le & \kappa_{eg} \d_k, \\
| f(y)-m_k(y)| &\le & \kappa_{ef} \d_k^2.\nonumber
\end{eqnarray}

2) The estimates $f_k^0$ and $f_k^s$ are said to be $\epsilon_F$-accurate estimates of $f(x_k)$ and $f(x_k+s_k)$, respectively,  for a given $\delta_k$ if
\begin{equation}\label{eq:estimates}
 |f_k^0 - f(x_k) |  \le \epsilon_F \d_k^2 \mbox{ and } |f_k^s - f(x_k+s_k) | \le \epsilon_F \d_k^2.
 \end{equation}

\end{definition} 

\begin{definition}\label{def:pfl-models}
A sequence of random models $\{ M_k \}$ is said to be $\alpha$-probabilistically $\kappa$-fully linear with respect to the corresponding sequence $\{ B(X_k,\Delta_k)\}$ if the events 
\begin{equation}\label{eq:Ik}
I_k = \one\{ M_k \mbox{ is a } \kappa \mbox{-fully linear model of } f \mbox{ on }B(X_k,\Delta_k)  \}
\end{equation}
satisfy the condition
\[ P (I_k=1 | \mathcal{F}_{k-1}^{M\cdot F})\ge \alpha.\]
\label{probabilisticmodel}
\end{definition}
\begin{definition}\label{def:pa-estimates} A sequence of random estimates $\{F_k\}$ is said to be $\beta$-probabilistically $\epsilon_F$-accurate with respect to the corresponding sequence $\{X_k,\Delta_k,S_k\}$ if the events
\begin{equation}\label{eq:Jk}
 J_k =\one \{F_k^0,F_k^s \mbox{ are } \epsilon_F\mbox{-accurate estimates of }f(x_k) \mbox{ and }f(x_k+s_k), \mbox{ respectively, \ for\ } \Delta_k  \}  
 \end{equation}
satisfy the condition 
\[P( J_k=1|\mathcal{F}_{k-1/2}^{M\cdot F}  )\ge \beta,\]
where $\epsilon_F$ is a fixed constant. 
\label{asmpesti}
\end{definition}

 Next is the key assumption on the nature of the stochastic (and deterministic) information used by our algorithm.
\begin{assumption}\label{ass:models_estim} The following hold for the quantities used in Algorithm \ref{algo:stodfosimple}

\begin{itemize}
\item[(a)] The model Hessians satisfy $\|H_k\|_2\leq \kappa_{bhm}$ for some $\kappa_{bhm}\geq 1$, for all $k$, deterministically. 


\item[(b)] The sequence of random models $M_k$, generated by Algorithm \ref{algo:stodfosimple}, is $\alpha$-probabilistically $\kappa$-fully linear, for some $\kappa= (\kappa_{ef}, \kappa_{eg})$ and for a sufficiently large $\alpha\in (0,1)$. 

\item[(c)] The sequence of random estimates $\{F_k\}$ generated by Algorithm \ref{algo:stodfosimple} is 
 $\beta$-probabilistically $\epsilon_F$-accurate 
for   $\epsilon_F\leq \kappa_{ef}$ and 
$\epsilon_F< \frac{1}{4}\eta_1\eta_2\kappa_{fcd}\min\left\{\frac{\eta_2}{\kappa_{bhm}},1\right\}$, 
and for a sufficiently large $\beta\in (0,1)$.

\end{itemize}
\end{assumption}
\begin{remark}\label{rem:eta2}
In  \cite{ChenMenickellyScheinberg2014} 
 the analysis of the algorithm requires an additional assumption that $\eta_2\geq \kappa_{ef}$ and for simplicity it is further  assumed  that $\eta_2\geq \kappa_{bhm}$.  This assumption is undesirable since it restricts the size of the steps that can be taken by the trust region algorithm.  In this paper we manage to improve the analysis and drop this assumption, hence allowing $\eta_2$ to be set to a small value. 
 Note that small values of $\eta_2$ imply small $\epsilon_F$ because of  Assumption \ref{ass:models_estim}(c), so there is a potential trade-off in choosing $\eta_2$. On the other hand, this relationship indicates that when $\epsilon_F=0$, that is when there is no error in the function estimates, then 
 $\eta_2$ can be arbitrarily small. 
 \end{remark}

Under Assumption \ref{ass:models_estim}, $P\{I_k J_k=1|\mathcal{F}_{k-1}^{M\cdot F}\}\geq \alpha\beta$ and $P\{I_k+J_k=0|\mathcal{F}_{k-1}^{M\cdot F}\}\leq (1-\alpha)(1-\beta)$. At iteration $k$, if $I_kJ_k=1$ then the behavior of the algorithm reduces to that of an (inexact) deterministic algorithm; while if $I_k+J_k=0$, then not only may the algorithm produce a bad step (that is a step which increases the objective function), but it also may accept this bad step by mistaking it for an improving step (that is a step that decreases the function value). In the cases when only one of $I_k=0$ and $J_k=0$ holds, then either the model is good but the estimates are faulty, or the estimates are good and the model is faulty. In this case an improving step is still possible, but a bad step is not. In the worst case, no step is taken and the trust region radius is reduced. The main idea of our framework is to choose probabilities of $I_kJ_k=1$ and $I_k+J_k=0$ occurring according to the possible corresponding decrease and increase in $f(x)$, so that in expectation, $f(x)$ is sufficiently reduced.

\subsection{Useful existing results}
Algorithm \ref{algo:stodfosimple} is analyzed in \cite{ChenMenickellyScheinberg2014} and  the following almost-sure stationarity result is shown:
 there exists a selection of $\alpha$ and $\beta$
 such that under Assumption \ref{ass:models_estim} with additional requirement that $\eta_2\geq \kappa_{ef}$, the sequence of random iterates generated by Algorithm \ref{algo:stodfosimple},
 $\{X_k\}$, almost surely satisfies $ \underset{k\to \infty}{\lim} \| \nabla f(X_k)  \|=0$.
The important observation is that $\alpha$ and $\beta$ do not have to increase as the algorithm progresses. Hence with the same, constant but small enough, probabilities our models and estimates can be arbitrarily erroneous.

%


Our primary goal in this paper is to bound the expected number of steps that the algorithm takes until $\| \nabla f(X_k)  \|\leq \epsilon$  occurs and the secondary goal is to relax the assumption $\eta_2\geq \kappa_{ef}$. 
We will modify the analysis  that led to the above stationarity result in  \cite{ChenMenickellyScheinberg2014}.
 First, we state (without proof) several auxiliary lemmas from \cite{ChenMenickellyScheinberg2014}.
 \begin{lemma}\label{lemma.delta.1}
{\em [Good model $\Rightarrow$ function reduction in $\|g_k\|$]} Suppose that a model $m_k$ is a $(\kappa_{ef},\kappa_{eg})$-fully linear model of $f$ on $B(x_k,\d_k)$. If
$$\d_k\le\min\left\{ \frac{1}{\kappa_{bhm}},\frac{\kappa_{fcd}}{8\kappa_{ef}}  \right\} \|g_k\|,$$
then the trial step $s_k$ leads to an improvement in $f(x_k+s_k)$ such that
\begin{eqnarray}f(x_k+s_k)-f(x_k)\le - \frac{\kappa_{fcd}}{4} \|g_k\|\d_k .
\end{eqnarray}
\end{lemma}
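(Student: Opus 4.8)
The plan is to combine the Cauchy decrease condition \eqref{eqn:CS} with the value-accuracy half of the fully-linear property \eqref{eq:fl-model}, using the two hypotheses on $\d_k$ for two separate purposes: the first to collapse the $\min$ appearing in \eqref{eqn:CS}, and the second to absorb the model error.

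First I would use Assumption \ref{ass:models_estim}(a), $\|H_k\|\le\kappa_{bhm}$, together with the hypothesis $\d_k\le\|g_k\|/\kappa_{bhm}$, to conclude that $\|g_k\|/\|H_k\|\ge\|g_k\|/\kappa_{bhm}\ge\d_k$, so that the minimum in \eqref{eqn:CS} equals $\d_k$ and the Cauchy condition simplifies to
\[
m_k(x_k)-m_k(x_k+s_k)\ \ge\ \frac{\kappa_{fcd}}{2}\,\|g_k\|\,\d_k .
\]

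Next I would transfer this model decrease to the true function by splitting
\[
f(x_k+s_k)-f(x_k)=\big(f(x_k+s_k)-m_k(x_k+s_k)\big)+\big(m_k(x_k+s_k)-m_k(x_k)\big)+\big(m_k(x_k)-f(x_k)\big)
\]
and bounding the first and last brackets by $\kappa_{ef}\d_k^2$ each via \eqref{eq:fl-model}, which is valid since both $x_k$ and $x_k+s_k$ lie in $B(x_k,\d_k)$ (recall $\|s_k\|\le\d_k$). This gives $f(x_k+s_k)-f(x_k)\le -\frac{\kappa_{fcd}}{2}\|g_k\|\d_k+2\kappa_{ef}\d_k^2$. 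Finally, I would invoke the second hypothesis $\d_k\le\frac{\kappa_{fcd}}{8\kappa_{ef}}\|g_k\|$ to bound $2\kappa_{ef}\d_k^2\le\frac{\kappa_{fcd}}{4}\|g_k\|\d_k$, so that half of the guaranteed decrease remains and one obtains precisely $f(x_k+s_k)-f(x_k)\le-\frac{\kappa_{fcd}}{4}\|g_k\|\d_k$.

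This is a routine chaining of inequalities — essentially the classical trust-region sufficient-decrease estimate adapted to fully-linear models — and I do not expect a genuine obstacle. The only points that need a moment's attention are verifying that the $\min$ in \eqref{eqn:CS} truly reduces to $\d_k$ under the first hypothesis (this is exactly where $\|H_k\|\le\kappa_{bhm}$ enters), and that the fully-linear value bound may legitimately be evaluated at the trial point $x_k+s_k$, which relies on the trust-region constraint $\|s_k\|\le\d_k$ imposed in the step-calculation step of Algorithm \ref{algo:stodfosimple}.
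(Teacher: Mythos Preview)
Your argument is correct and is exactly the standard chaining one would expect. Note, however, that the paper does not actually supply its own proof of this lemma: it is explicitly stated without proof as one of the auxiliary results imported from \cite{ChenMenickellyScheinberg2014}, so there is nothing in the present paper to compare against beyond confirming that your derivation matches the intended result.
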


\begin{lemma}\label{lemma.delta.2} {\em [Good model $\Rightarrow$ function reduction in $\|\nabla f(x_k)\|$]} Under Assumption \ref{ass:models_estim}(a), suppose that a model is $(\kappa_{ef},\kappa_{eg})$-fully linear on $B(x_k,\d_k)$. If
\begin{eqnarray}\label{condition.lemma.delta.2}
\d_k\le \min \left\{ \frac{1}{\kappa_{bhm}+\kappa_{eg}}, \frac{1}{\frac{8\kappa_{ef}}{\kappa_{fcd}}+\kappa_{eg}}\right\} \|\nabla f(x_k)\|,
\end{eqnarray}
then the trial step $s_k$ leads to an improvement in $f(x_k+s_k)$ such that
\begin{eqnarray}f(x_k+s_k)-f(x_k)\le - C_1 \|\nabla f(x_k)\|\d_k , \label{eqn.true.decrease}
\end{eqnarray}
for any $C_1\leq \frac{\kappa_{fcd}}{4}\cdot \max\left\{ \frac{\kappa_{bhm}}{\kappa_{bhm}+\kappa_{eg}},\frac{8\kappa_{ef}}{8\kappa_{ef}+\kappa_{fcd}\kappa_{eg}}\right\}.$
\end{lemma}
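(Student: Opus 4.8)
The plan is to deduce Lemma~\ref{lemma.delta.2} directly from Lemma~\ref{lemma.delta.1} by controlling $\|g_k\|$ in terms of $\|\nabla f(x_k)\|$ using the full-linearity of the model. First I would observe that since $m_k$ is $(\kappa_{ef},\kappa_{eg})$-fully linear on $B(x_k,\d_k)$, the gradient error bound $\|\nabla f(x_k)-g_k\|\le \kappa_{eg}\d_k$ gives, by the reverse triangle inequality, $\|g_k\|\ge \|\nabla f(x_k)\|-\kappa_{eg}\d_k$. The idea is then to show that the hypothesis \eqref{condition.lemma.delta.2} on $\d_k$ forces the right-hand side here to be a definite positive fraction of $\|\nabla f(x_k)\|$, and simultaneously forces $\d_k$ to satisfy the hypothesis of Lemma~\ref{lemma.delta.1}, namely $\d_k\le\min\{1/\kappa_{bhm},\kappa_{fcd}/(8\kappa_{ef})\}\|g_k\|$.

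Concretely, I would split into the two cases corresponding to which term attains the minimum in \eqref{condition.lemma.delta.2}. In the first case $\d_k\le \|\nabla f(x_k)\|/(\kappa_{bhm}+\kappa_{eg})$, which rearranges to $(\kappa_{bhm}+\kappa_{eg})\d_k\le \|\nabla f(x_k)\|$, i.e. $\kappa_{bhm}\d_k\le \|\nabla f(x_k)\|-\kappa_{eg}\d_k\le \|g_k\|$, so $\d_k\le \|g_k\|/\kappa_{bhm}$; and also $\kappa_{eg}\d_k\le \frac{\kappa_{eg}}{\kappa_{bhm}+\kappa_{eg}}\|\nabla f(x_k)\|$, whence $\|g_k\|\ge \|\nabla f(x_k)\|-\kappa_{eg}\d_k\ge \frac{\kappa_{bhm}}{\kappa_{bhm}+\kappa_{eg}}\|\nabla f(x_k)\|$. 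In the second case $\d_k\le \|\nabla f(x_k)\|/\big(\frac{8\kappa_{ef}}{\kappa_{fcd}}+\kappa_{eg}\big)$, the identical manipulation yields $\d_k\le \frac{\kappa_{fcd}}{8\kappa_{ef}}\|g_k\|$ and $\|g_k\|\ge \frac{8\kappa_{ef}}{8\kappa_{ef}+\kappa_{fcd}\kappa_{eg}}\|\nabla f(x_k)\|$. In either case the hypothesis of Lemma~\ref{lemma.delta.1} holds (the relevant one of the two terms in its minimum is dominated, and the other term is no smaller for that branch), so that lemma gives $f(x_k+s_k)-f(x_k)\le -\frac{\kappa_{fcd}}{4}\|g_k\|\d_k$; substituting the corresponding lower bound on $\|g_k\|$ then produces $f(x_k+s_k)-f(x_k)\le -C_1\|\nabla f(x_k)\|\d_k$ with $C_1$ equal to $\frac{\kappa_{fcd}}{4}$ times the branch-specific factor, and taking the worse (larger) of the two factors — which is the $\max$ appearing in the statement — covers both cases with a single constant.

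I would be slightly careful about one bookkeeping point: in each branch I have controlled only one of the two terms in the minimum defining the hypothesis of Lemma~\ref{lemma.delta.1}, so I should check that the other term is automatically satisfied, or else argue that it suffices to dominate the smaller term. Since $\|g_k\|>0$ on the relevant range, $\d_k\le\min\{a,b\}\|g_k\|$ is implied once we know $\d_k\le a\|g_k\|$ and $\d_k\le b\|g_k\|$; the cleanest route is to note that \eqref{condition.lemma.delta.2} takes the minimum over both terms, so in fact \emph{both} inequalities $\d_k\le\|\nabla f(x_k)\|/(\kappa_{bhm}+\kappa_{eg})$ and $\d_k\le\|\nabla f(x_k)\|/\big(\tfrac{8\kappa_{ef}}{\kappa_{fcd}}+\kappa_{eg}\big)$ hold simultaneously, hence \emph{both} conclusions $\d_k\le\|g_k\|/\kappa_{bhm}$ and $\d_k\le\tfrac{\kappa_{fcd}}{8\kappa_{ef}}\|g_k\|$ hold, verifying the full hypothesis of Lemma~\ref{lemma.delta.1} at once. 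Then the only case distinction needed is which lower bound on $\|g_k\|$ to plug in, and taking the $\max$ of the two admissible values of $C_1$ finishes it. The main obstacle is really just this: making sure the algebraic rearrangement $(\kappa_{bhm}+\kappa_{eg})\d_k\le\|\nabla f(x_k)\|\Rightarrow \kappa_{bhm}\d_k\le\|g_k\|$ is done correctly using the full-linear gradient estimate, and that the two constant factors are tracked so the final $C_1$ matches the claimed $\max$ expression — both routine once the structure above is in place.
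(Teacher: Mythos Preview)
Your argument is correct. Note, however, that the paper does not actually prove Lemma~\ref{lemma.delta.2} here --- it is one of the ``auxiliary lemmas from \cite{ChenMenickellyScheinberg2014}'' that the authors explicitly state \emph{without proof} --- so there is no in-paper proof to compare against. Your reduction to Lemma~\ref{lemma.delta.1} via the full-linearity bound $\|g_k\|\ge\|\nabla f(x_k)\|-\kappa_{eg}\d_k$ is exactly the standard route: since \eqref{condition.lemma.delta.2} imposes \emph{both} inequalities on $\d_k$ simultaneously, both $\d_k\le\|g_k\|/\kappa_{bhm}$ and $\d_k\le\tfrac{\kappa_{fcd}}{8\kappa_{ef}}\|g_k\|$ follow (as you note in your final paragraph), Lemma~\ref{lemma.delta.1} applies, and substituting the larger of the two available lower bounds on $\|g_k\|$ gives the stated $C_1$. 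One small wording slip: the $\max$ is not the ``worse'' choice but the \emph{better} one --- it is available precisely because both branches of \eqref{condition.lemma.delta.2} hold at once, so you may take the sharper lower bound on $\|g_k\|$; your self-correction in the last paragraph already handles this properly.
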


\begin{lemma}\label{lemma.delta.3} { \em [Good model $+$ good estimates $\Rightarrow$ successful step]}  Under Assumption \ref{ass:models_estim}(a), suppose that $m_k$ is $(\kappa_{ef},\kappa_{eg})$-fully linear on $B(x_k,\d_k)$ and the estimates $\{ f_k^0,f_k^s \}$ are $\epsilon_F$-accurate with $\epsilon_F\le \kappa_{ef}$. If 
\begin{eqnarray}\label{condition.lemma.delta.3}
 \d_k \le \min \left\{ \dfrac{1}{\kappa_{bhm}} , \frac{1}{\eta_2}, \dfrac{ \kappa_{fcd}  (1-\eta_1)}{ 8\kappa_{ef} }   \right\} \| g_k \|, 
 \end{eqnarray}
then the $k$-th iteration is successful.
\end{lemma}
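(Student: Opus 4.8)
The plan is to verify directly the two conditions that define a successful iteration in Step 5--6 of Algorithm \ref{algo:stodfosimple}, namely $\rho_k\ge\eta_1$ and $\|g_k\|\ge\eta_2\d_k$. The second is immediate: the middle term in the hypothesis on $\d_k$ gives $\d_k\le \|g_k\|/\eta_2$, i.e. $\|g_k\|\ge\eta_2\d_k$. So the work is entirely in showing $\rho_k\ge\eta_1$.

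First I would get a clean lower bound on the model decrease $m_k(x_k)-m_k(x_k+s_k)$. By the Cauchy decrease condition \eqref{eqn:CS} and Assumption \ref{ass:models_estim}(a), together with $\d_k\le \|g_k\|/\kappa_{bhm}\le \|g_k\|/\|H_k\|$ (so that the minimum in \eqref{eqn:CS} is attained at $\d_k$), one obtains
\[
m_k(x_k)-m_k(x_k+s_k)\ \ge\ \frac{\kappa_{fcd}}{2}\,\|g_k\|\,\d_k .
\]
Next I would control the gap between the observed decrease $f_k^0-f_k^s$ and the model decrease. Using $\epsilon_F$-accuracy of the estimates \eqref{eq:estimates}, $\kappa$-full linearity of $m_k$ on $B(x_k,\d_k)$ \eqref{eq:fl-model} (which bounds $|f(y)-m_k(y)|$ by $\kappa_{ef}\d_k^2$ at both $y=x_k$ and $y=x_k+s_k$, the latter lying in the ball since $\|s_k\|\le\d_k$), and $\epsilon_F\le\kappa_{ef}$, the triangle inequality yields
\[
\bigl|(f_k^0-f_k^s)-(m_k(x_k)-m_k(x_k+s_k))\bigr|\ \le\ 2(\epsilon_F+\kappa_{ef})\d_k^2\ \le\ 4\kappa_{ef}\d_k^2 .
\]

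Combining the two displays, I would write
\[
\rho_k\ =\ \frac{f_k^0-f_k^s}{m_k(x_k)-m_k(x_k+s_k)}\ \ge\ 1-\frac{4\kappa_{ef}\d_k^2}{m_k(x_k)-m_k(x_k+s_k)}\ \ge\ 1-\frac{8\kappa_{ef}\d_k}{\kappa_{fcd}\|g_k\|},
\]
and then invoke the third term in the hypothesis, $\d_k\le \kappa_{fcd}(1-\eta_1)\|g_k\|/(8\kappa_{ef})$, to conclude $\rho_k\ge 1-(1-\eta_1)=\eta_1$. Since both defining conditions hold, the iteration is successful. The only mildly delicate point is the bookkeeping of the four error terms and keeping track of which bounds require full linearity versus estimate accuracy; everything else is a direct substitution, so I do not expect a genuine obstacle here — this lemma is quoted from \cite{ChenMenickellyScheinberg2014} precisely because its proof is routine.
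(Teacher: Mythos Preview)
Your argument is correct and is exactly the standard proof of this lemma. Note, however, that the paper does not give its own proof of Lemma~\ref{lemma.delta.3}: it is explicitly listed among the auxiliary results stated ``without proof'' and attributed to \cite{ChenMenickellyScheinberg2014}. Your write-up matches the argument in that reference: Cauchy decrease plus $\d_k\le\|g_k\|/\kappa_{bhm}$ to lower-bound the model reduction, the four-term triangle inequality (two from $\epsilon_F$-accuracy, two from $\kappa_{ef}$-full linearity, then $\epsilon_F\le\kappa_{ef}$) to control $|\rho_k-1|$, and the third branch of \eqref{condition.lemma.delta.3} to close; the $\|g_k\|\ge\eta_2\d_k$ check is immediate from the middle branch.
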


\begin{lemma}\label{lemma.delta.4} { \em [ Good estimates $+$ successful step $\Rightarrow$ function reduction in $\delta_k^2$]} 
Under Assumption \ref{ass:models_estim}(a), suppose that the estimates $\{f_k^0,f_k^s  \}$ are $\epsilon_F$-accurate with $\epsilon_F< \frac{1}{4}\eta_1\eta_2 \kappa_{fcd}\min \left\{ \frac{\eta_2}{\kappa_{bhm}},1 \right\}$. If a trial step $s_k$ is accepted (a successful iteration occurs), then the improvement in $f$ is bounded below as follows
\begin{eqnarray}\label{eqn.lemma.4}
f(x_{k+1})-f(x_k)\le -C_2\delta_k^2,\end{eqnarray}
where 
\begin{equation}\label{eq:C2}
C_2 =\frac{1}{2}\eta_1\eta_2 \kappa_{fcd}\min \left\{ \frac{\eta_2}{\kappa_{bhm}},1 \right\}-2\epsilon_F>0.
\end{equation}
\end{lemma}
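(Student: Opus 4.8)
The plan is to unwind the definition of a successful iteration and then chain together the Cauchy decrease condition \eqref{eqn:CS} with the $\epsilon_F$-accuracy of the estimates \eqref{eq:estimates}. First I would note that, by the acceptance and radius-update steps of Algorithm \ref{algo:stodfosimple}, a successful iteration at $k$ means $\rho_k\ge\eta_1$ and $\|g_k\|\ge\eta_2\delta_k$, so $x_{k+1}=x_k+s_k$ and, from the definition of $\rho_k$,
\[ f_k^0 - f_k^s \ \ge\ \eta_1\bigl(m_k(x_k)-m_k(x_k+s_k)\bigr). \]

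Next I would lower-bound the model decrease. Using \eqref{eqn:CS}, the bound $\|H_k\|\le\kappa_{bhm}$ from Assumption \ref{ass:models_estim}(a), and $\|g_k\|\ge\eta_2\delta_k$,
\[ m_k(x_k)-m_k(x_k+s_k) \ \ge\ \frac{\kappa_{fcd}}{2}\|g_k\|\min\Bigl\{\tfrac{\|g_k\|}{\kappa_{bhm}},\delta_k\Bigr\} \ \ge\ \frac{\kappa_{fcd}\,\eta_2}{2}\,\delta_k^2\,\min\Bigl\{\tfrac{\eta_2}{\kappa_{bhm}},1\Bigr\}. \]
The point to handle with care here --- and the reason the constant takes the stated form --- is that one should \emph{not} invoke the simplifying assumption $\eta_2\ge\kappa_{bhm}$ (cf.\ Remark \ref{rem:eta2}) to replace the minimum by $1$; retaining $\min\{\eta_2/\kappa_{bhm},1\}$ is precisely what permits $\eta_2$ to be small. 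Combining the last two displays yields $f_k^0-f_k^s\ge\tfrac12\eta_1\eta_2\kappa_{fcd}\min\{\eta_2/\kappa_{bhm},1\}\,\delta_k^2$.

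Finally I would pass from the estimated decrease to the true decrease. Since $\{f_k^0,f_k^s\}$ are $\epsilon_F$-accurate for $\delta_k$, \eqref{eq:estimates} gives $f(x_k)\le f_k^0+\epsilon_F\delta_k^2$ and $f(x_k+s_k)\ge f_k^s-\epsilon_F\delta_k^2$, hence
\[ f(x_{k+1})-f(x_k) \ \le\ (f_k^s-f_k^0) + 2\epsilon_F\delta_k^2 \ \le\ -\Bigl(\tfrac12\eta_1\eta_2\kappa_{fcd}\min\{\eta_2/\kappa_{bhm},1\} - 2\epsilon_F\Bigr)\delta_k^2 \ =\ -C_2\delta_k^2, \]
and $C_2>0$ is immediate from the hypothesis $\epsilon_F<\tfrac14\eta_1\eta_2\kappa_{fcd}\min\{\eta_2/\kappa_{bhm},1\}$, which also matches the definition \eqref{eq:C2}.

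There is no serious obstacle: the lemma is a direct chain of inequalities. The only places requiring attention are the treatment of the minimum in \eqref{eqn:CS} without assuming $\eta_2\ge\kappa_{bhm}$, and the bookkeeping needed to make the resulting constant coincide exactly with $C_2$ in \eqref{eq:C2}; everything else is routine.
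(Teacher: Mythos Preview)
Your argument is correct and is the standard one; note that the paper itself does not supply a proof of this lemma but cites it from \cite{ChenMenickellyScheinberg2014}, so there is no ``paper proof'' to compare against beyond observing that your chain of inequalities is exactly how such a lemma is established. One small slip: to obtain the upper bound $f(x_{k+1})-f(x_k)\le (f_k^s-f_k^0)+2\epsilon_F\delta_k^2$ you need the \emph{other} pair of $\epsilon_F$-accuracy inequalities, namely $f(x_k)\ge f_k^0-\epsilon_F\delta_k^2$ and $f(x_k+s_k)\le f_k^s+\epsilon_F\delta_k^2$; the ones you wrote would give a lower bound, though of course both pairs are immediate from \eqref{eq:estimates} and your final display is correct.
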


\paragraph{Choosing constants}\label{page:constants} 
We now explain briefly the role of the constants  $\eta_2,\epsilon_F,\alpha,$ and $\beta$ and their expected magnitude. First, we note that constants $\kappa_{ef},\kappa_{eg}$, and $\kappa_{bhm}$ can be chosen arbitrarily large, but ideally should be chosen as small as possible while guaranteeing Assumption \ref{ass:models_estim}. Let us assume that $\kappa_{ef},\kappa_{eg}$ and $\kappa_{bhm}$ can be all chosen as $\Theta(L)$\footnote{Note that it is possible to have 
$\kappa_{ef}$ and $\kappa_{eg}$ of different magnitudes, namely when $\kappa_{eg}$ is small as we have sufficiently accurate gradients, but $\kappa_{ef}$ remains as $\Theta(L)$. Our analysis and results apply then as well.}, where $L$ is the Lipschitz constant of $\nabla f(x)$ in $\cal{X}$, even though it may not be explicitly known (see \cite{DFObook} for construction of fully-linear models in the case of unavailable derivative estimates). Once these constants are chosen, $\epsilon_F$ is chosen so that it satisfies the conditions in Assumption \ref{ass:models_estim}(c).  Note that if $\eta_2$ is chosen  to equal $L$, this means that Algorithm \ref{algo:stodfosimple} only takes steps when $\delta_k \leq \frac{ \|g_k\|}{L}$, which is similar to constraining  step size by $\frac{1}{L}$ in gradient descent. In this case, $ \epsilon_F$ can be  chosen 
relatively large and thus  the estimates need to be slightly more accurate than the models, but the order of required accuracy is similar. 
On the other hand, since in trust region methods, step sizes are meant to be chosen adaptively, it  is desirable to allow larger steps, which can be done by setting $\eta_2$ to be small. This, in turn,  implies that $ \epsilon_F$ has to be chosen small  and hence the estimates will have to be a lot more accurate than the models. Another trade-off when choosing a small value for $\eta_2$ will become apparent in our main complexity results, as we will see that the expected improvement per iteration may depend on $\eta_2$. But reasonable values for $\eta_2$ allow the removal of this dependency.

To simplify expressions for various constants we will assume that $\eta_1=0.1$, $\gamma=2$ and $\kappa_{fcd}=0.5$ which are typical values for these constants. We also assume, w.l.o.g., that $\kappa_{bhm}\leq 12\kappa_{ef}$ and $\eta_2\leq \kappa_{eg}$. 
To simplify expressions further we will consider $\kappa_{ef}=\kappa_{eg}$. It is clear that if $\kappa_{ef}$ or $\kappa_{eg}$ happen to be smaller, somewhat better bounds than the ones  we derive here will result, because the models give tighter approximations of the true function. We are interested in deriving bounds for the case when $\kappa_{ef}$ or $\kappa_{eg}$ may be large. The analysis can be performed for any other values of the above constants, hence the choice here is done merely for convenience and simplicity.

The conditions on $\alpha$ and $\beta$ under the above choice of constants will be shown in our results below.  
 
 \subsection{Defining and analyzing the process $\{\Phi_k, \Delta_k\}$.}
We consider a random process $\{\Phi_k, \Delta_k\}$ derived from the  process  generated by Algorithm \ref{algo:stodfosimple}, with $\Delta_k$ - the trust region radius and 
\begin{equation}\label{eq:Phidef}
\Phi_k = \nu f(X_k) + (1-\nu)\Delta_k^2
\end{equation}
where $\nu\in (0,1)$ is a deterministic, large enough constant, which will be defined later. 
Clearly $\Phi_k\geq 0$. 
Recall the notation $\phi_k$ for realizations of $\Phi_k$. Here we defined $\mathcal{F}_{k}$ as $\mathcal{F}_{k}^{M\cdot F}$.

Define a random time 
\begin{equation}\label{eq:Tdef}
T_\epsilon = \inf\{k\geq 0 : \|\nabla f(X_k)\|\leq\epsilon\}.
\end{equation} 
It is easy to see that $T_\epsilon$ is a stopping time for the stochastic process defined by Algorithm \ref{algo:stodfosimple} and hence for $\{\Phi_k, \Delta_k\}$. 

As stated our goal is to bound the expected stopping time $\mathbb{E}(T_\epsilon)$. We will do so by showing that Assumption \ref{ass:stoch-proc} is satisfied for 
$\{\Phi_k, \Delta_k\}$ and applying results of Section \ref{sec:walds}.


Let us  show that Assumptions \ref{ass:stoch-proc}(i)-(ii) hold with the following $\Delta_{\epsilon}$

\begin{equation}\label{eq:zeta}
\Delta_\epsilon = \displaystyle\frac{\epsilon}{\zeta}, \quad {\rm for \  }\zeta\geq\kappa_{eg}+\max\{\eta_2, \kappa_{bhm}, \frac{8\kappa_{ef}}{\kappa_{fcd}(1-\eta_1)}\}.
\end{equation}
Note that with  our choice of algorithmic parameters the above  is satisfied by  $\zeta=20\kappa_{eg}$. 

For simplicity of the presentation and without loss of generality, we assume that  $\Delta_\epsilon=\gamma^i\delta_0$, for some integer 
$i\leq 0$. If not, we can always choose $\zeta$  within a factor of ${\gamma}$ of its lower bound in \eqref{eq:zeta}. 
It follows that for any $k$, $\Delta_k=\gamma^{i_k}\Delta_\epsilon$, for some integer $i_k$. Choosing $\lambda$
in Assumption \ref{ass:stoch-proc}(i)-(ii) so that $e^\lambda=\gamma$ Assumption \ref{ass:stoch-proc}(i) follows immediately from the definition of  $\{\Phi_k, \Delta_k\}$,  and the  choice of  $ \delta_{max}$ imposed by  Algorithm  \ref{algo:stodfosimple} and for Assumption \ref{ass:stoch-proc}(ii)  we now only need to show that the dynamics  \eqref{deltaprocess} hold for $\Delta_k$.

\begin{lemma}\label{lem:Deltak}
Let Assumptions \ref{ass:F} and  \ref{ass:models_estim} hold. Let $\alpha$ and $\beta$ be such that $\alpha\beta\geq 1/2$, then Assumption \ref{ass:stoch-proc}(ii)
is satisfied for $W_k=2(I_kJ_k-\frac{1}{2})$, $\lambda=\log(\gamma)$ and $p=\alpha\beta$. 
\end{lemma}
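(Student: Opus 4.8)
The plan is to read off the two ingredients of Assumption \ref{ass:stoch-proc}(ii): the probabilistic structure of $W_k$, and the deterministic lower bound \eqref{deltaprocess} on $\Delta_{k+1}$ conditioned on $T_\epsilon>k$. For the first, observe that $W_k=2(I_kJ_k-\tfrac12)$ takes values in $\{-1,+1\}$ with $W_k=1$ exactly when $I_kJ_k=1$, so $e^{\lambda W_k}=\gamma^{W_k}\in\{\gamma,\gamma^{-1}\}$. By Assumption \ref{ass:models_estim}(b)--(c) together with the tower property over $\mathcal{F}_{k-1}^{M\cdot F}\subseteq\mathcal{F}_{k-1/2}^{M\cdot F}$, one gets $P(W_k=1\mid\mathcal{F}_{k-1})=P(I_kJ_k=1\mid\mathcal{F}_{k-1})\ge\alpha\beta=p$ (and $p\ge\tfrac12$, strictly for the uses in Section \ref{sec:walds}), so $W_k$ obeys \eqref{w_process}. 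It then remains to verify \eqref{deltaprocess}, which I would do by a short case analysis, using the normalization $\Delta_k=\gamma^i\Delta_\epsilon$ so that $\Delta_k>\Delta_\epsilon$ is equivalent to $\Delta_k\ge\gamma\Delta_\epsilon$.

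First the easy cases. If $I_kJ_k=0$, i.e.\ $W_k=-1$, then regardless of success the radius update of Algorithm \ref{algo:stodfosimple} gives $\Delta_{k+1}\ge\gamma^{-1}\Delta_k=\Delta_ke^{\lambda W_k}\ge\min(\Delta_ke^{\lambda W_k},\Delta_\epsilon)$. If $I_kJ_k=1$ but $\Delta_k>\Delta_\epsilon$, then $\Delta_k\ge\gamma\Delta_\epsilon$, so even an unsuccessful step yields $\Delta_{k+1}\ge\gamma^{-1}\Delta_k\ge\Delta_\epsilon=\min(\gamma\Delta_k,\Delta_\epsilon)=\min(\Delta_ke^{\lambda W_k},\Delta_\epsilon)$.

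The crucial case is $I_kJ_k=1$ and $\Delta_k\le\Delta_\epsilon$. Since $T_\epsilon>k$ we have $\|\nabla f(X_k)\|>\epsilon$, and since $I_k=1$ the model $m_k$ is $(\kappa_{ef},\kappa_{eg})$-fully linear on $B(x_k,\delta_k)$, hence
\[
\|g_k\|\ \ge\ \|\nabla f(x_k)\|-\kappa_{eg}\delta_k\ >\ \epsilon-\kappa_{eg}\Delta_\epsilon\ =\ (\zeta-\kappa_{eg})\Delta_\epsilon\ \ge\ \max\Bigl\{\eta_2,\kappa_{bhm},\tfrac{8\kappa_{ef}}{\kappa_{fcd}(1-\eta_1)}\Bigr\}\,\Delta_k,
\]
by \eqref{eq:zeta} and $\Delta_k\le\Delta_\epsilon$. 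Rearranging, $\delta_k\le\min\{1/\kappa_{bhm},1/\eta_2,\kappa_{fcd}(1-\eta_1)/(8\kappa_{ef})\}\|g_k\|$, which is exactly condition \eqref{condition.lemma.delta.3}; since $J_k=1$ gives $\epsilon_F$-accurate estimates with $\epsilon_F\le\tfrac14\eta_1\eta_2\le\kappa_{ef}$, Lemma \ref{lemma.delta.3} applies and iteration $k$ is successful (the bound $\delta_k\le\|g_k\|/\eta_2$ also supplies the $\|g_k\|\ge\eta_2\delta_k$ half of the acceptance test). Therefore $\Delta_{k+1}=\min(\gamma\Delta_k,\Delta_{\max})=\gamma\Delta_k=\Delta_ke^{\lambda W_k}\ge\min(\Delta_ke^{\lambda W_k},\Delta_\epsilon)$, the minimum being attained at $\gamma\Delta_k$ because $\Delta_k\le\Delta_\epsilon\le\gamma^{-1}\Delta_{\max}$. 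Collecting the three cases establishes \eqref{deltaprocess}, proving Assumption \ref{ass:stoch-proc}(ii) with $W_k=2(I_kJ_k-\tfrac12)$, $\lambda=\log\gamma$, $p=\alpha\beta$.

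The only nontrivial step is the crucial case: translating the \emph{true}-gradient bound $\|\nabla f(X_k)\|>\epsilon$ through the fully-linear model error into the \emph{model}-gradient condition of Lemma \ref{lemma.delta.3}. This is precisely what the choice $\Delta_\epsilon=\epsilon/\zeta$ with $\zeta\ge\kappa_{eg}+\max\{\eta_2,\kappa_{bhm},8\kappa_{ef}/(\kappa_{fcd}(1-\eta_1))\}$ is engineered to make work; once this is in place the remaining bookkeeping (grid normalization, matching ``successful'' to the algorithm's two-part test, and $\epsilon_F\le\kappa_{ef}$ from the constant choices) is routine.
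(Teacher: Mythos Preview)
Your proposal is correct and follows essentially the same route as the paper's proof: both reduce the verification of \eqref{deltaprocess} to showing that when $I_kJ_k=1$ and $\Delta_k\le\Delta_\epsilon$, the fully-linear error bound converts $\|\nabla f(x_k)\|>\epsilon$ into the hypothesis \eqref{condition.lemma.delta.3} of Lemma~\ref{lemma.delta.3}, forcing a successful iteration; the remaining cases ($I_kJ_k=0$, or $\Delta_k>\Delta_\epsilon$ via the grid normalization $\Delta_k\ge\gamma\Delta_\epsilon$) are handled identically by the worst-case radius update $\Delta_{k+1}\ge\gamma^{-1}\Delta_k$. Your write-up is slightly more explicit than the paper's about the tower-property argument for $P(I_kJ_k=1\mid\mathcal{F}_{k-1})\ge\alpha\beta$ and about checking the $\|g_k\|\ge\eta_2\delta_k$ half of the acceptance test, but there is no substantive difference in strategy.
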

\begin{proof}
Clearly inequality \eqref{deltaprocess} holds when $I(T_\epsilon > k)=0$. 
We will  show that conditioned on $T_\epsilon > k$ (i.e. $I(T_\epsilon > k)=1$)  we have
\begin{equation}
\label{Deltakdynamics}
\Delta_{k+1}  \geq  \min\{\Delta_\epsilon, \min\{\Delta_{\max},\gamma\Delta_k\}I_kJ_k + \gamma^{-1}\Delta_k(1-I_kJ_k)\}.
\end{equation}
First we note that for each realization when $\delta_k>\Delta_\epsilon$, we have $\delta_k\geq \gamma\Delta_\epsilon$  and hence $\delta_{k+1}  \geq  \Delta_\epsilon$. 
Now, assume that $\delta_k\leq\Delta_\epsilon$, then, because $T_\epsilon > k$,  we have $\|\nabla f(x_k)\|>\epsilon$ and hence, from the definition of $\zeta$,  
we know that
\[
 \|\nabla f(x_k)\|\ge \left(\kappa_{eg}+ \max\left\{  \eta_2,  \kappa_{bhm}, \frac{8\kappa_{ef}}{\kappa_{fcd}(1-\eta_1)}\right\}\right)\delta_k.
 \]

Assume that $I_k=1$ and $J_k=1$, i.e., both the  model and the  estimates are good on iteration $k$.  
Since the model $m_k$ is $\kappa$-fully linear and 
 $$\|g_k\|\ge \|\nabla f(x_k)\|-\kappa_{eg}\d_k\ge (\zeta-\kappa_{eg})\d_k\ge \max \left\{\eta_2, \kappa_{bhm}, \frac{8\kappa_{ef}}{\kappa_{fcd}(1-\eta_1)}  \right\}\d_k,$$
 and the estimates $\{ f_k^0,f_k^s \}$ are $\epsilon_F$-accurate, with $\epsilon_F\le \kappa_{ef}$, condition (\ref{condition.lemma.delta.3}) in Lemma \ref{lemma.delta.3} holds. Hence, iteration $k$ is successful, i.e. $x_{k+1}=x_k+s_k$ and $\d_{k+1}=\max\{\delta_{max},\gamma\d_k\}$.
 If $I_kJ_k=0$, then $\d_{k+1}\geq\gamma^{-1}\d_k$ simply by 
 the dynamics of Algorithm  \ref{algo:stodfosimple}. 
 
Finally,  observing that $P\{I_kJ_k=1|\mathcal{F}_{k-1}^{M\cdot F}\}\geq p=\alpha\beta$ we conclude that  \eqref{Deltakdynamics} implies Assumption \ref{ass:stoch-proc}(ii). 
\end{proof}


We now show that Assumption \ref{ass:stoch-proc}(iii) holds, which is the key theorem in this section and is similar to Theorem 4.11 in    \cite{ChenMenickellyScheinberg2014}, while dropping the restrictive conditions on $\eta_2$
and simplifying the proof. We will omit the parts of the proof that are identical to those of Theorem 4.11 in    \cite{ChenMenickellyScheinberg2014}. 

\begin{theorem}\label{thm:main}
 There exist probabilities $\alpha$ and $\beta$  such that under Assumptions \ref{ass:F} and  \ref{ass:models_estim} 
 there exists a constant $\Theta>0$ 
such that, conditioned on $T_\epsilon > k$
\begin{equation}
\label{case1dynamics}
1(T_\epsilon > k)\E[\Phi_{k+1}-\Phi_{k}|\mathcal{F}_{k-1}^{M\cdot F} ]  \leq -1(T_\epsilon > k)\Theta \Delta_k^2. 
\end{equation} 
Moreover,  under the particular choice of constants described in the last section, let $\alpha$ and $\beta$ satisfy
\[
\frac{(\alpha\beta-\frac{1}{2})}{(1-\alpha)(1-\beta)}\ge 10 +\frac{30L}{40\kappa_{eg}},
\]
and 
$$
\beta\geq 
\frac{\kappa_{eg} +0.064 L +4\cdot 10^{-4}{\eta_2}}{ \kappa_{eg} + 0.064 L +4.5\cdot 10^{-4}\eta_2}. 
$$
Then\footnote{Note that $\beta>\frac{1}{2}$ and so $\Theta= \frac{1}{1800}\kappa_{eg}^{-1}$, independently of $\eta_2$, provided $\eta_2\geq 2\kappa_{eg}^{-1}$; the latter implies that  small values are allowed for $\eta_2$ as $\kappa_{eg}$ values of interest are large.}, $\Theta= \frac{1}{1800}\min \left\{\eta_2\beta,\kappa_{eg}^{-1}\right\}$.
\end{theorem}

\begin{proof} 
Since \eqref{case1dynamics} holds trivially if $T_\epsilon \leq k$, we assume henceforth in this proof that $\nabla f(X_k)>\epsilon$. 
We will consider two possible cases:  $\| \nabla f(x_k)  \| \ge \zeta \delta_k $ and $\| \nabla f(x_k)  \| < \zeta \delta_k$.
We will show that \eqref{case1dynamics}
holds in both cases and hence it holds on every iteration.
Let  $\nu\in (0,1)$ be such that
\begin{eqnarray}\label{eq:nu}
\frac{\nu}{1-\nu}&>& \max \left\{ \frac{4\gamma^2}{\zeta C_1} , \frac{4\gamma^2}{ \eta_1\eta_2\kappa_{fcd}}, \frac{\gamma^2}{\kappa_{ef}} \right\},
\end{eqnarray}
with $C_1$ defined as in Lemma \ref{lemma.delta.2}. 

Let $x_k$, $\delta_k$, $s_k$, $g_k$, and $\phi_k$ denote realizations of random quantities $X_k$, $\Delta_k$, $S_k$, $G_k$, and  $\Phi_k$, respectively.
Let us consider some realization of Algorithm  \ref{algo:stodfosimple}. Note that on all successful iterations, $x_{k+1}=x_k+s_k$ and $\delta_{k+1}=\min\{\gamma \delta_k, \delta_{max}\}$ with $\gamma>1$, hence
\begin{eqnarray} 
\phi_{k+1}-\phi_k\leq\nu (f(x_{k+1})-f(x_k))+(1-\nu)(\gamma^2-1)\delta_k^2.\label{eqn.suc.phi}
\end{eqnarray} 
On  all unsuccessful iterations, $x_{k+1}=x_k$ and $\delta_{k+1}=\frac{1}{\gamma} \delta_k$, i.e. 
\begin{eqnarray}
\phi_{k+1}-\phi_k=(1-\nu)(\frac{1}{\gamma^2}-1)\delta_k^2\equiv b_1<0.\label{eqn.unsuc.phi}
\end{eqnarray}

\paragraph{Case 1: $\| \nabla f(x_k)  \| \ge \zeta \delta_k $
with  $\zeta$ satisfying \eqref{eq:zeta}. }

Let $\alpha$ and $\beta$ satisfy
\begin{eqnarray}\label{eq:alpha.beta.condition.1}
\frac{(\alpha\beta-\frac{1}{2})}{(1-\alpha)(1-\beta)}&\ge& \frac{ C_3 }{C_1},
\end{eqnarray}
with $C_1$ defined in Lemma \ref{lemma.delta.2} and $C_3=1+\frac{3L}{2\zeta}$.

\begin{itemize}
\item[a.] $I_k=1$ and $J_k=1$, i.e., both the  model and the  estimates are good on iteration $k$.  
The proof is almost identical to that in 
Theorem 4.11   \cite{ChenMenickellyScheinberg2014}, but with small modification due to the different definition of $\zeta$ because we no longer assume that $\eta_2\geq  \kappa_{bhm}$. 


%
By observing that Lemma \ref{lemma.delta.2} and Lemma \ref{lemma.delta.3} hold we can derive 
\begin{eqnarray}
\phi_{k+1}-\phi_k\le -\nu C_1\| \nabla f(x_k)\|\delta_k +(1-\nu)(\gamma^2-1)\delta_k^2\equiv b_2,\label{eqn.suc.phi.truedecrease1}
\end{eqnarray}
for $\nu\in(0,1)$ satisfying \eqref{eq:nu}. 


\item[b.] $I_k=1$ and  $J_k=0$, i.e., we have a good model and bad estimates  on iteration  $k$. 
The proof is identical to that in 
Theorem 4.11   \cite{ChenMenickellyScheinberg2014} where it is shown that \eqref{eqn.unsuc.phi} holds.

\item[c.] $I_k=0$  and  $J_k=1$, i.e., we have a bad model and good estimates  on iteration  $k$. 
Again \eqref{eqn.unsuc.phi} holds, as is shown in
Theorem 4.11 \cite{ChenMenickellyScheinberg2014}. 
%
%

\item[d.] $I_k=0$ and $J_k=0$, i.e., both the  model and the  estimates are bad on iteration $k$.
The proof of  
Theorem 4.11    \cite{ChenMenickellyScheinberg2014} applies, where it is shown 
that 
\begin{eqnarray}
\phi_{k+1}-\phi_k\le \nu C_3 \|\nabla f(x_k)\| \delta_k+(1-\nu)(\gamma^2-1)\delta_k^2\equiv b_3.\label{eqn.suc.phi.falseincrease}
\end{eqnarray}
 holds with  $C_3=1+\frac{3L}{2\zeta}$. 

%
\end{itemize}
\vskip0.5cm
Next, following the proof of Case 1 of Theorem 4.11 in    \cite{ChenMenickellyScheinberg2014} we combine the four outcomes to obtain 
that under condition \eqref{eq:alpha.beta.condition.1}, we have
\begin{equation}\label{eq:expectPhi1}
\E[\Phi_{k+1}-\Phi_{k}|\mathcal{F}_{k-1}^{M\cdot F},  \{  \|\nabla f(X_k)\|\geq \zeta \Delta_k \}] \le  -\frac{1}{4} C_1 \nu \|\nabla f(X_k)\|\Delta_k\leq  -\frac{1}{4} \frac{C_1 \nu}{\zeta} \Delta_k^2.\nonumber
\end{equation}
where last inequality is due to  $\|\nabla f(X_k)\|\geq \zeta \Delta_k$. 

 \vskip0.5cm
 We now derive the bounds on   the expectation of $\Phi_{k+1}-\Phi_{k}$  in the remaining case. The proof of this case is different than  that of Case 2 of Theorem 4.11   \cite{ChenMenickellyScheinberg2014}, because of the dropped bound on $\eta_2$. 
 
 \paragraph{Case 2: $\| \nabla f(x_k)  \| < \zeta \delta_k $
with  $\zeta$ satisfying \eqref{eq:zeta}. }


%

First we note that if  $\| g_k \| <\eta_2\delta_k$, then we have an unsuccessful step and \eqref{eqn.unsuc.phi}  holds. Hence, we now assume that $\| g_k \| \geq \eta_2\delta_k$.  Here we consider only two outcomes, in particular, we will show that  when the estimates are good, \eqref{eqn.unsuc.phi}   holds.
 Otherwise, because $\| \nabla f(x_k)  \| < \zeta \delta_k $,
 the increase in $\phi_k$ can be bounded from above by a multiple of $\delta_k^2$. Hence by selecting appropriate value for probability  $\beta$ we will be able to establish the bound on expected decrease in $\Phi_k$ as in Case 1. 
\begin{itemize}
\item[a.] 
 $J_k=1$, i.e.,  the  estimates are good on iteration $k$, while the model might be good or bad.  

The iteration may or may not be successful. On successful iterations, the good estimates ensure reduction in $f$, while on unsuccessful iterations, $\delta_k$ is reduces. Applying the same argument as in the Case 1(c) we  establish  that  \eqref{eqn.unsuc.phi} always holds.

\item[b.] $J_k=0$, i.e.,  the  estimates are bad on iteration $k$, while the model might be good or bad.

Here, as in Case 1, we bound the maximum possible increase in $\phi_k$. Using the  Taylor expansion, the  Lipschitz continuity of $\nabla f(x)$ and  taking into account the bound $ \|\nabla f(x_k)\|<\zeta\delta_k$ we have
\begin{eqnarray*}
 f(x_k+s_k)-f(x_k)\le \|\nabla f(x_k)\|\delta_k+\frac{1}{2}L\delta_k^2<C_3\zeta\delta_k^2.
 \end{eqnarray*}
 Hence, the change in function $\phi$ is 
\begin{eqnarray}
\phi_{k+1}-\phi_k\le[\nu C_3\zeta+(1-\nu)(\gamma^2-1)]\delta_k^2.\label{eqn.suc.phi.falseincrease.case2}
\end{eqnarray}

\end{itemize}

We are now ready to bound the expectation of $\phi_{k+1}-\phi_{k}$  as we did in Case 1 except that now we simply combine 
(\ref{eqn.suc.phi.falseincrease.case2}),
which holds with probability at most $(1-\beta)$, and  (\ref{eqn.unsuc.phi}), which holds otherwise:
\begin{eqnarray}\label{eq:boundcase2}
&&\E[\Phi_{k+1}-\Phi_{k}|\mathcal{F}_{k-1}^{M\cdot F}, \{\| \nabla f(X_k)  \| < \zeta \Delta_k\} ]\nonumber \\
&\le&\beta(1-\nu)(\frac{1}{\gamma^2}-1)\Delta_k^2 \\
&&+ (1-\beta)[\nu C_3\zeta+(1-\nu)(\gamma^2-1)]\Delta_k^2\nonumber 
\end{eqnarray}
If we choose probability $0<\beta\leq 1$  so that the following holds,
\begin{equation}\label{eq:cond8}
\frac {\beta}{1-\beta} \ge \frac {2\nu \gamma^2 C_3\zeta } {(1-\nu)(\gamma^2 -1)} + 2\gamma^2,
\end{equation}
then the first term in \eqref{eq:boundcase2}, which is negative, 
 is at least twice as large in absolute value as the second term, which is positive. 
We thus have
\begin{eqnarray}\label{eq:final.decrease.in.phi.2}
 \E[\Phi_{k+1}-\Phi_{k}|\mathcal{F}_{k-1}^{M\cdot F}, \{\| \nabla f(X_k)  \| < \zeta \Delta_k\} ] \leq \frac{1}{2}\beta(1-\nu)(\frac{1}{\gamma^2}-1)\Delta_k^2.
\end{eqnarray}

 \vskip10mm

To complete   the proof of the lemma it remains to  substitute the appropriate constants in the above expressions. 
In particular, because of our assumptions that $\kappa_{bhm}\leq 12\kappa_{ef}$ and $\kappa_{ef} =\kappa_{eg}$, we can choose  $C_1=\frac{1}{10}$, and, recalling the choice of $\zeta=20\kappa_{eg}$, $\gamma=2$, $\eta_1=0.1$, $\kappa_{fcd}=0.5$ and $\eta_2\leq \kappa_{eg}$,  \eqref{eq:nu} reduces to
\begin{equation}\label{eq:nu_bound}
\frac{\nu}{1-\nu}\geq \frac{4\gamma^2}{\eta_1\eta_2\kappa_{fcd}}\geq  \frac{320}{ \eta_2},
\end{equation}
which holds if 
$$
\nu \geq \frac{320}{ 320+\eta_2}. 
$$
We can assume that $\nu > \frac{1}{2}$ without loss of generality. 

{\bf Case 1:}
For the  probabilities $\alpha$ and $\beta$ to satisfy \eqref{eq:alpha.beta.condition.1} with $C_3=1+\frac{3L}{2\zeta}$, it is sufficient that
$$
\frac{(\alpha\beta-\frac{1}{2})}{(1-\alpha)(1-\beta)}\ge 10 +\frac{30L}{40\kappa_{eg}}.
$$
  Then, using $\nu>\frac{1}{2}$ \eqref{eq:expectPhi1} implies 
$$
 \E[\Phi_{k+1}-\Phi_{k}|\mathcal{F}_{k-1}^{M\cdot F}, \{ \nabla f(X_k)  \| < \zeta \Delta_k\}] \le - \frac{1}{1600\kappa_{eg}}  \Delta_k^2.
$$

{\bf Case 2:} Recalling the expression for $C_3$ and the values for constant $\nu$, $\zeta$ and  $\gamma= 2$, and choosing $\nu$ so that
\eqref{eq:nu_bound} is satisfied with equality, we see that 
\eqref{eq:cond8} is satisfied if 
$$
\frac {\beta}{(1-\beta)} \ge \frac{4\times 320(40 \kappa_{eg} +3L)}{3\eta_2} +8,
 $$
 which is satisified if 
\begin{equation}\label{eq:cond8withconstants}
\frac {\beta}{(1-\beta)} \ge \frac {1280(14 \kappa_{eg}+L) }{\eta_2} +8,
\end{equation}
which  in turn is satisfied by 
$$
\beta\geq \frac{2\cdot 10^4 \kappa_{eg} +1280 L +8{\eta_2}}{2\cdot 10^4 \kappa_{eg} +1280 L +9\eta_2}=
\frac{\kappa_{eg} +0.064 L +4\cdot 10^{-4}{\eta_2}}{ \kappa_{eg} + 0.064 L +4.5\cdot 10^{-4}\eta_2}. 
$$
Then, observing that $\nu$ is chosen so that $1-\nu=\frac{\eta_2}{320+\eta_2}$,  from \eqref{eq:final.decrease.in.phi.2}
and $\eta_2<320$ (as $\nu>\frac{1}{2}$),
\[
\E[\Phi_{k+1}-\Phi_{k}|\mathcal{F}_{k-1}^{M\cdot F}, \{\| \nabla f(X_k)  \| < \zeta \Delta_k\} ]\leq- \frac{3\eta_2}{8(320+\eta_2)}\beta\Delta_k^2\leq -\frac{1}{1800}\eta_2\beta\Delta_k^2.
\]

Hence we conclude that 

$$
\E[\Phi_{k+1}-\Phi_{k}|\mathcal{F}_{k-1}^{M\cdot F} ]  \leq  -\Theta \Delta_k^2
$$
for  $\Theta= \frac{1}{1800}\min \left\{\eta_2\beta,\kappa_{eg}^{-1}\right\}$, which completes the proof. 

 \end{proof}

The  almost-sure stationarity result follows immediately from Theorem \ref{thm:main} with the same proof as  in \cite{ChenMenickellyScheinberg2014}, but this time without the assumption $\eta_2\geq \kappa_{ef}$. 
\begin{theorem} \label{th:convergenceliminfnew} 
 Let  Assumptions \ref{ass:F} and  \ref{ass:models_estim} hold,
  and let $\alpha$ and $\beta$ satisfy conditions of Theorem \ref{thm:main}, then
 the sequence of random iterates generated by Algorithm \ref{algo:stodfosimple},
 $\{X_k\}$, almost surely satisfies 
 \[ \underset{k\to \infty}{\lim} \| \nabla f(X_k)  \|=0.\]
\end{theorem}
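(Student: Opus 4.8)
The plan is to observe that Theorem~\ref{thm:main} is exactly the ingredient which, in \cite{ChenMenickellyScheinberg2014}, was obtained only under the additional hypothesis $\eta_2\ge\kappa_{ef}$ (their ``Case~1'' expected-decrease estimate) and which drives the entire almost-sure convergence argument there. So the first step is to audit that argument and confirm that $\eta_2\ge\kappa_{ef}$ is invoked \emph{only} in the proof of that expected-decrease estimate: every other building block it uses --- the deterministic decrease Lemmas~\ref{lemma.delta.2}--\ref{lemma.delta.4}, the trust-region-radius growth estimate of Lemma~\ref{lem:Deltak}, and the boundedness Assumptions~\ref{ass:F} --- is already available here without it. Once this is checked, the proof of \cite{ChenMenickellyScheinberg2014} applies verbatim with Theorem~\ref{thm:main} substituted for the corresponding result there, and only the measure-theoretic scaffolding needs to be recalled.

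For completeness I would reproduce the two stages of that proof. \emph{Stage~1 ($\liminf$).} Fix $\epsilon>0$ and suppose, for contradiction, that $P(T_\epsilon=\infty)>0$. On $\{T_\epsilon=\infty\}$ the process $\Phi_k+\Theta\sum_{j\le k}\Delta_j^2$ (the $R_k$ from the proof of Lemma~\ref{n_t_eps_bound}, with $h(\Delta)=\Delta^2$) is a nonnegative supermartingale by Theorem~\ref{thm:main}, hence converges almost surely, which forces $\sum_k\Delta_k^2<\infty$ and therefore $\Delta_k\to 0$ on that event. But once $\Delta_k$ is small enough one has $\|\nabla f(X_k)\|\ge\epsilon\ge\zeta\Delta_k$, so Assumption~\ref{ass:stoch-proc}(ii) (established in Lemma~\ref{lem:Deltak}) applies and $\Delta_k$ is bounded below by a geometric random walk with strictly positive drift, since $p=\alpha\beta>\tfrac12$; such a walk returns above $\Delta_\epsilon$ infinitely often and does not converge to zero, a contradiction. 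Hence $P(T_\epsilon<\infty)=1$ for every $\epsilon>0$, i.e. $\liminf_k\|\nabla f(X_k)\|=0$ almost surely.

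\emph{Stage~2 ($\liminf\Rightarrow\lim$).} Suppose, on a positive-probability event, $\limsup_k\|\nabla f(X_k)\|\ge 2\epsilon_0>0$ while $\liminf_k\|\nabla f(X_k)\|=0$. Then that event splits into infinitely many disjoint ``excursions'' $[\ell_i,u_i)$ with $\|\nabla f(X_{\ell_i})\|<\epsilon_0$, $\|\nabla f(X_{u_i})\|\ge 2\epsilon_0$, and $\epsilon_0\le\|\nabla f(X_k)\|<2\epsilon_0$ in between. Restricting further to the sub-event on which the models and estimates behave accurately throughout the excursions, Lipschitz continuity of $\nabla f$ forces $\|X_{u_i}-X_{\ell_i}\|\ge\epsilon_0/L$, which can only be realized through successful steps whose lengths sum to at least $\epsilon_0/L$; by Lemmas~\ref{lemma.delta.2} and \ref{lemma.delta.4} each such step reduces $f$ in proportion to its length, so $f$ decreases by a fixed positive amount on each excursion, contradicting the boundedness of $f$ from below (quantitatively, again via the supermartingale $\Phi_k$). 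Hence $\lim_k\|\nabla f(X_k)\|=0$ almost surely.

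The one genuinely delicate point, and the step I expect to be the main obstacle, is the probabilistic bookkeeping in Stage~2: arguing that ``during each excursion the models and estimates are accurate often enough to force $f$-decrease'' holds with probability one, even though accuracy is only a conditional-probability statement relative to $\mathcal{F}_{k-1}^{M\cdot F}$ and $\mathcal{F}_{k-1/2}^{M\cdot F}$. This part is inherited unchanged from \cite{ChenMenickellyScheinberg2014} and is insensitive to the value of $\eta_2$, which is why dropping $\eta_2\ge\kappa_{ef}$ costs nothing here. Finally, the stopping-time assertion is immediate: $\lim_k\|\nabla f(X_k)\|=0$ a.s. gives $P(T_\epsilon<\infty)=1$, and $\{T_\epsilon=k\}$ is determined by $\mathcal{F}_k^{M\cdot F}$ because each $X_j$ is a function of $M_0,\dots,M_{j-1},F_0,\dots,F_{j-1}$; thus $T_\epsilon$ satisfies Definition~\ref{def:stoptime}.
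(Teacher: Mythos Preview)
Your proposal is correct and takes essentially the same approach as the paper: the paper's own proof is literally the single remark that the result ``follows immediately from Theorem~\ref{thm:main} with the same proof as in \cite{ChenMenickellyScheinberg2014}, but this time without the assumption $\eta_2\ge\kappa_{ef}$,'' and your plan is exactly to invoke that citation while verifying that $\eta_2\ge\kappa_{ef}$ was only needed for the expected-decrease estimate now supplied by Theorem~\ref{thm:main}. Your two-stage sketch and stopping-time check simply spell out what the cited argument contains, so nothing more is required.
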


The validity of the Assumption \ref{ass:stoch-proc}(iii) follows from  Theorem \ref{thm:main}.  
We 
state the result below  for completeness and convenience of reference. 
\begin{lemma}\label{lem:Vk}
Let  the assumptions of Theorem \ref{thm:main} hold. 
Then Assumption \ref{ass:stoch-proc}(iii) is satisfied, with 
$\Theta= \frac{1}{1800}\min \left\{\eta_2\beta,\kappa_{eg}^{-1}\right\}$
for the process $\{\Phi_k, \Delta_k\}$, where $\Phi_k$ is defined as in \eqref{eq:Phidef} with $\nu$ satisfying 
\eqref{eq:nu} and $h(\delta)= \delta^2$.
\end{lemma}

\subsection{Complexity result for first order STORM algorithm}

\begin{theorem}\label{the:firstorder}
Consider Algorithm \ref{algo:stodfosimple} and the corresponding stochastic process.  Let $T_\epsilon$ be defined as in \eqref{eq:Tdef}. 
 Then, under the assumptions of Theorem \ref{thm:main}, 
\[ 
\E[T_\epsilon] \leq \frac{\alpha\beta}{2\alpha\beta-1}
 \left(\displaystyle\frac{20\Phi_0\kappa_{eg}}
{\Theta\epsilon^2}
+1\right),
\]
where 
$\Theta= \frac{1}{1800}\min \left\{\eta_2\beta,\kappa_{eg}^{-1}\right\}$,  $\Phi_0$ defined as in \eqref{eq:Phidef} with $k=0$, with $\nu$ satisfying 
\eqref{eq:nu}. 
\end{theorem}

\subsection{Example of models and estimates satisfying Assumption \ref{ass:models_estim}}
\label{1:Storm-examples}
While the assumption \ref{ass:models_estim},  which allows us to develop the general complexity analysis, is fairly general it is easy to satisfy in practice in the classical stochastic optimization setting by taking a sufficient number of samples of the function, gradient and Hessian estimates. 
A number of recent papers rely on this technique, for producing sufficiently accurate gradient and Hessian approximations. For example Lemma 4 in \cite{Tripuraneni2017} uses matrix concentration results from \cite{Tropp2015} to show that given the bound on the variance of the gradient 
$$
\E[ \|\nabla {\tilde f}(x,\xi)-\nabla f(x)\|] \leq \sigma_g^2,
 $$
 the average of $\tilde {\cal O}(\frac{\sigma_g^2}{\varepsilon^2})$ gradient samples $\nabla {\tilde f}(x,\xi)$, $g$, satisfies
 $$
 \|g-\nabla {\tilde f}(x,\xi)\|\leq \epsilon
 $$
 with probability $p$, where $\tilde {\cal O}$ hides the term dependent on $-\log(1-p)$. Similar result is established for the Hessian sample average approximation. The result for the function estimates, given the variance $\sigma_f$ is a simpler version of the same inequalities and can be derived sing Chebychev inequality. 
 
 Using these results, we can obtain $\alpha$-probabilistically fully-linear models as follows. 
 Compute $f_k$ by averaging  ${\cal O}(\frac{\sigma_f^2}{\Delta_k^4}\log(\frac{1}{1-\sqrt{\alpha}}))$ samples  ${\tilde f}(x_k,\xi)$, then compute
  $g_k$ as an average of $\tilde {\cal O}(\frac{\sigma_g^2}{\kappa_{eg}^2\Delta_k^2}\log(\frac{1}{1-\sqrt{\alpha}}))$ gradient samples $\nabla {\tilde f}(x_k,\xi)$. This ensures that $\|g_k-\nabla f(x_k)\|\leq \kappa_{eg}\Delta_k$ and $|f_k-f(x_k)|\leq \Delta_k^2$ with probability at least $\alpha$.  Condition 
 $|m(y)- f(y)|\leq \kappa_{ef}\Delta_k^2$ follows automatically  with appropriately chosen $\kappa_{ef}$. 
 
 We want to note that all sample sizes are determined by quantities that are generally either chosen or known by the algorithm or can be correctly estimated.  
 
 Similarly,  we  can  obtain  $\beta$-probabilistically $\epsilon_F$-accurate estimates $f_k^0$ and $f_k^s$ by averaging 
 $\tilde {\cal O}(\frac{\sigma_f^2}{\epsilon_F^2\Delta_k^4}\log(\frac{1}{1-\sqrt{\beta}}))$ samples of $ {\tilde f}(x_k,\xi)$ and $ {\tilde f}(x_k+s_k,\xi)$ for each., respectively.

In the case of simulation optimization, when $\nabla f(x,\xi)$ is not available, $\kappa$-fully-linear models $m_k$ can be constructed via polynomial interpolation \cite{DFObook}, and $\alpha$-probabilistically $\kappa$-fully-linear models are similarly obtained by combining interpolation and sufficiently accurate function value estimates (see, e.g. \cite{2015sarhasetal}).


Another setting that is explored in  \cite{ChenMenickellyScheinberg2014} is when $f(x)$ and, possibly, $\nabla \tilde f(x)$ are computed (accurately) via some procedure, which may fail with some small, but fixed, probability. In this case $\tilde f(x,\xi)$ and $\nabla \tilde f(x, \xi)$ are the true values of the function and the gradients, or some arbitrarily corrupted values. If the probability of failure is sufficiently small, conditioned on the past, the STORM algorithm still converges almost surely. See  \cite{ChenMenickellyScheinberg2014}  for details.


\section{The second order STORM algorithm}
\label{section:second-order}

We now introduce a variant of Algorithm \ref{algo:stodfosimple} that attempts to achieve second order criticality in 
the stochastic setting; we use the same notation as in Algorithm \ref{algo:stodfosimple}.
Firstly, 
the model minimization may need to provide more than just the Cauchy decrease \eqref{eqn:CS},
namely, we require that on each iteration $k$ and for all model realizations $m_k$ (as defined in Step 2)
 of $M_k$, we are able to compute a step
$s_k$, so that the following level of second order
improvement is achieved,
\begin{equation}\label{fod-final} m_k(x_k) - m_k(x_k+s_k) \; \geq \;
\frac{\kappa_{scd}}{2} \max \left\{
            \|g_k\|
            \min\left[ \frac{\|g_k\|}{\| H_k \|}, \delta_k \right],
           \max \{-\lambda_{\min}(H_k), 0\} \delta_k^2
           \right\}.
 \end{equation} for some constant $\kappa_{scd}
\in (0,1]$. A step satisfying this (typical second order) assumption is given, for instance, by
computing both the Cauchy step and,
in the presence of negative curvature in the model,
the {\it eigenstep}, and by choosing the
one that provides the largest reduction in the model\footnote{The eigenstep is the minimizer of the quadratic model in the trust region along an eigenvector
corresponding to the smallest (negative) eigenvalue of~$H_k$.} \cite{TRBook}.

In our analysis (not in the algorithm), we will use --- instead of just the true gradient of $f$ --- the following measure of proximity to a second order stationary point for the objective $f$, 
\begin{equation}\label{2:f-optimality}
\tau(x) \;=\;
\max \left\{\| \nabla f(x) \|, 
 -\lambda_{\min}(\nabla^2f(x)) \right\}.
\end{equation}
The corresponding optimality measure for the model $m_k$ is defined slightly
differently than above, following \cite{bandeira2013convergence},
\begin{equation}\label{2:model-optimality}
\tau^m(x) \;=\;
\max \left\{ \min \left[ \| \nabla m(x) \|, \frac{\| \nabla m(x) \|} {\| \nabla^2 m(x) \|}\right],
-\lambda_{\min}(\nabla^2m(x)) \right\}.
\end{equation}
The additional term in \eqref{2:model-optimality} compared to \eqref{2:f-optimality}  is
needed because there is no longer a bound on  the model Hessians on
all iterations,
as  in the first order case. We will only use \eqref{2:model-optimality} at the iterate $x_k$, in which case, it becomes
\begin{equation}\label{2:model-optimality-2}
\tau^m_k \;=\;
\max \left\{ \min \left[ \| g_k \|, \frac{\| g_k \|} {\| H_k \|}\right],
-\lambda_{\min}(H_k) \right\}.
\end{equation}

We are now ready to present our second order STORM algorithm, by modifying the first order STORM algorithm. 

\begin{algorithm}
\caption{{\sc second order Stochastic DFO with Random Models}}
\label{algo:stodfosecond}
Perform the STORM Algorithm \ref{algo:stodfosimple}, with the following modifications to Steps 3, 5 and 6:
 \begin{itemize}
\item[3:] (Step calculation) Compute $s_k = \arg \underset{s: \| s \|\leq \delta_k}{\min}  m_k(s) $ (approximately) so that it satisfies  condition \eqref{fod-final}. 
\item[5:] (Acceptance of the trial point): If $\rho_k\geq \eta_1$ and $\tau^m_k \geq \eta_2 \delta_k$, set $x_{k+1} =x_k+s_k$; otherwise, set $x_{k+1}=x_k$.
\item[6:] (Trust-region  radius update): If $\rho_k\geq \eta_1$ and $\tau^m_k \geq \eta_2 \delta_k$, set $\delta_{k+1} =\min\{  \gamma \delta_k,\delta_{\max}\}$; otherwise $\delta_{k+1}=\gamma^{-1} \delta_k$; $k \gets k+1$ and go to step \ref{step.model}.
  \end{itemize}
\end{algorithm}

The analysis for the second order STORM variant will again use the framework proposed
in Section 2, thus serving as another illustration of the applicability of our generic set up. Before
applying this framework we need to describe our assumptions required for a second order analysis.

In terms of problem assumptions,  we will need one more order of
smoothness compared to first order ones (Assumption \ref{ass:F}).

\begin{assumption} \label{assumptions:Lip2_cont}
Assume that $f$ satisfies Assumption \ref{ass:F} and that it is twice
continuously differentiable on ${\cal X}$, and also that the
Hessian $\nabla^2 f$  is $L_H$-Lipschitz continuous. 
\end{assumption}


\subsection{Assumptions on the second order STORM algorithm}


Let us now introduce a measure of second order quality or accuracy of the models $m_k$
(see~~\cite{DFObook, BillupsLarson, larson2013stochastic} for more details).

\begin{definition}
1) A function $m_k$ is a \emph{$\kappa$-fully quadratic model of $f$ on $B(x_k,\d_k)$} provided, for $ \kappa = (\kappa_{ef}, \kappa_{eg},\kappa_{eh})$ and  $\forall y \in B(x_k,\delta_k)$,
\begin{eqnarray*}\label{eq:fq-model}
\|\nabla^2 f(x_k) - H_k \| & \leq & \kappa_{eh} \delta_k, \\
\|\nabla f(y) - \nabla m_k(y) \| & \leq & \kappa_{eg} \delta_k^2, \\
\|f(y) - m(y) \| & \leq & \kappa_{ef} \delta_k^3.
\end{eqnarray*}

2) The estimates $f_k^0$ and $f_k^s$ are said to be $\epsilon_F$-s.o.-accurate (s.o. for "second order") estimates of $f(x_k)$ and $f(x_k+s_k)$, respectively,  for a given $\delta_k$ if
\begin{equation}\label{eq:estimates_2}
 |f_k^0 - f(x_k) |  \le \epsilon_F \d_k^3 \mbox{ and } |f_k^s - f(x_k+s_k) | \le \epsilon_F \d_k^3.
 \end{equation}

\end{definition} 

\begin{definition}\label{def:pfq-models}
\cite{ASBandeira_LNVicente_KScheinberg_2013}
A sequence of random models $\{ M_k \}$ is said to be $\alpha$-{\bf probabilistically} $\kappa$-{\bf fully quadratic} with respect to the corresponding sequence $\{ B(X_k,\Delta_k)\}$ if the events 
\begin{equation}\label{eq:Ik_2}
I_k = \one\{ M_k \mbox{ is a } \kappa \mbox{-fully quadratic model of } f \mbox{ on }B(X_k,\Delta_k)  \}
\end{equation}
satisfy the condition
\[ P (I_k=1 | \mathcal{F}_{k-1}^{M\cdot F})\ge \alpha.\]
\label{probabilisticmodel2}
\end{definition}
\begin{definition}\label{def:pa-estimates_2} A sequence of random estimates $\{F_k^0,F_k^s\}$ is said to be $\beta$-{\bf probabilistically} $\epsilon_F$-{\bf s.o.-accurate} with respect to the corresponding sequence $\{X_k,\Delta_k,S_k\}$ if the events
\begin{equation}\label{eq:Jk_2}
 J_k =\one \{F_k^0,F_k^s \mbox{ are } \epsilon_F\mbox{-s.o.accurate estimates of }f(x_k) \mbox{ and }f(x_k+s_k), \mbox{ respectively, \ for\ } \Delta_k  \}  
 \end{equation}
satisfy the condition 
\[P( J_k=1|\mathcal{F}_{k-1/2}^{M\cdot F}  )\ge \beta,\]
where $\epsilon_F$ is a fixed constant. 
\label{asmpesti2}
\end{definition}

We will no longer assume that the Hessian $H_k$ of the models is bounded in norm, since we cannot simply disregard
large Hessian model values without possibly affecting the chances of the model being fully quadratic. However, a simple analysis can show that $\|H_k\|$ is uniformly bounded from above for any
fully quadratic model $m_k$ (although we may not know what this bound is and hence may not be able to use it in an algorithm).

\begin{lemma}\cite{bandeira2013convergence}\label{lemma:bmh}
Let Assumption \ref{assumptions:Lip2_cont} hold.
Given constants $\kappa_{eh}$, $\kappa_{eg}$, $\kappa_{ef}$,  and $\delta_{\max}$,
there exists a constant $\kappa_{bhm}\geq 1$ such that
for every~$k$ and every realization~$m_k$ of $M_k$
which is a $(\kappa_{ef},\kappa_{eg},\kappa_{eh})$-fully quadratic
model of $f$ on $B(x_k,\delta_k)$ with $x_k\in {\cal X}$ and $\delta_k\leq \delta_{\max}$ we have
\[
\|H_k\| \; \leq \; \kappa_{bhm}.
\]
\end{lemma}

\proof{The proof follows trivially from the definition of fully quadratic models and the assumption that
$\|\nabla^2 f\|\leq L$ is bounded above  on
${\cal X}$, which follows from the gradient of $f$ being Lipschitz continuous with constant $L$. Then we can let $\kappa_{bhm}:=\delta_{\max}\kappa_{eh}+L$.}

For our convergence analysis we again need to impose conditions on the nature of the stochastic (and deterministic) information used by our algorithm.
\begin{assumption}\label{ass:models_estim2} The following hold for the quantities used in Algorithm \ref{algo:stodfosecond}

\begin{itemize}

\item[(a)] The sequence of random models $M_k$, generated by Algorithm
  \ref{algo:stodfosecond}, is $\alpha$-probabilistically
  $\kappa$-fully quadratic, for some 
$\kappa= (\kappa_{ef}, \kappa_{eg},\kappa_{eh})$ and for a sufficiently large $\alpha\in (0,1)$. 

\item[(b)] The sequence of random estimates $\{F_k^0,F_k^s\}$ generated by Algorithm \ref{algo:stodfosecond} is 
 $\beta$-probabilistically $\epsilon_F$-s.o.accurate 
for   $\epsilon_F\leq \kappa_{ef}$ and $\epsilon_F< \frac{1}{4}\eta_1\eta_2\kappa_{scd}\min\{\eta_2,1\}$, and for a sufficiently
large $\beta\in (0,1)$. 
\end{itemize}
\end{assumption}

Note that as in the first order case, we are able to allow for unrestricted values of $\eta_2$ in Algorithm \ref{algo:stodfosecond}, with a potential trade-off of increased accuracy on the function estimates.

\subsection{Useful preliminary results for second order STORM analysis}

The analysis of Algorithm \ref{algo:stodfosecond} is similar to that
of the first order STORM described in Section~\ref{sec:firstorder}.
However, there are more cases to consider and the convergence rate to
the second order stationary point is different, as it is in the
deterministic case. There will also be another significant difference, such as a requirement for an additional assumption on
function estimates,
to be detailed in the next section.
First, we state and prove the analogues of Lemmas \ref{lemma.delta.1}--\ref{lemma.delta.4} for the function decrease in terms of first and second order optimality. The first three lemmas are almost identical to Lemmas \ref{lemma.delta.1}--\ref{lemma.delta.3}, except that the models are assumed to be fully quadratic, instead of fully linear; the model decrease condition \eqref{fod-final} is now used and  condition
$\|H_k\|\leq \kappa_{bhm}$ is only valid when the model $m_k$ is
fully-quadratic according to Lemma \ref{lemma:bmh}. For completeness,
we have delegated the proofs of Lemmas \ref{lemma.delta.1_1}--Lemmas
\ref{lemma.delta.3_1} to the Appendix.

\begin{lemma}\label{lemma.delta.1_1} 
{\rm [Good quadratic model $\Rightarrow$ function reduction in $\|g_k\|$]}
Let Assumption \ref{assumptions:Lip2_cont} hold. Suppose that a model $m_k$  is a
$(\kappa_{ef},\kappa_{eg},\kappa_{eh})$-fully quadratic model of $f$
on $B(x_k,\d_k)$. If $\delta_k\leq 1$ and
$$\d_k\le\min\left\{ \frac{1}{\kappa_{bhm}},\frac{\kappa_{scd}}{8\kappa_{ef}}  \right\} \|g_k\|,$$
then the trial step $s_k$ leads to an improvement in $f(x_k+s_k)$ such that
\begin{eqnarray*}
f(x_k+s_k)-f(x_k)\le - \frac{\kappa_{scd}}{4} \|g_k\|\d_k .
\end{eqnarray*}
\end{lemma}

\begin{lemma}\label{lemma.delta.2_1} 
{\rm [Good quadratic model $\Rightarrow$ function reduction in $\|\nabla f(x_k)\|$]}
Let Assumption \ref{assumptions:Lip2_cont} hold.
Suppose that a model is
  $(\kappa_{ef},\kappa_{eg},\kappa_{eh})$-fully quadratic on
  $B(x_k,\d_k)$. If $\delta_k\leq 1$ and
\begin{eqnarray}\label{condition.lemma.delta.2_1}
\d_k\le \min \left\{ \frac{1}{\kappa_{bhm}+\kappa_{eg}}, \frac{1}{\frac{8\kappa_{ef}}{\kappa_{scd}}+\kappa_{eg}}\right\} \|\nabla f(x_k)\|,
\end{eqnarray}
then the trial step $s_k$ leads to an improvement in $f(x_k+s_k)$ such that
\begin{eqnarray}f(x_k+s_k)-f(x_k)\le - C_1 \|\nabla f(x_k)\|\d_k , \label{eqn.true.decrease_1}
\end{eqnarray}
for any $C_1\leq\frac{\kappa_{scd}}{4}\cdot \max\left\{ \frac{\kappa_{bhm}}{\kappa_{bhm}+\kappa_{eg}},\frac{8\kappa_{ef}}{8\kappa_{ef}+\kappa_{scd}\kappa_{eg}}\right\}.$
\end{lemma}

\begin{lemma}\label{lemma.delta.3_1} 
{\rm [Good quadratic model + good s.o.~estimates $\Rightarrow$ successful step]}
Let Assumption \ref{assumptions:Lip2_cont} hold.
Suppose that $m_k$ is
  $(\kappa_{ef},\kappa_{eg},\kappa_{eh})$-fully quadratic on
  $B(x_k,\d_k)$ and the estimates $\{ f_k^0,f_k^s \}$ are
  $\epsilon_F$-s.o. accurate with $\epsilon_F\le \kappa_{ef}$. If
  $\delta_k\leq 1$ and
\begin{eqnarray}\label{condition.lemma.delta.3_1}
 \d_k \le \min \left\{ \dfrac{1}{\kappa_{bhm}} , \frac{1}{\eta_2\kappa_{bhm}}, \dfrac{ \kappa_{scd}  (1-\eta_1)}{ 8\kappa_{ef} }   \right\} \| g_k \| , 
 \end{eqnarray}
then the $k$-th iteration is successful.
\end{lemma}

The remaining lemmas address the case of negative curvature in the model and that of second order accurate estimates.

\begin{lemma}\label{lemma.delta.1_2}
{\rm [Good quadratic model $\Rightarrow$ function reduction in $\lambda_{\min}(H_k)$]}
Let Assumption \ref{assumptions:Lip2_cont} hold.
Suppose that a model $m_k$  is a
$(\kappa_{ef},\kappa_{eg},\kappa_{eh})$-fully quadratic model of $f$
on $B(x_k,\d_k)$. If  
\begin{equation}\label{2:delta-minH}
\d_k\le\frac{\kappa_{scd}}{8\kappa_{ef}} (-\lambda_{\min}(H_k)),
\end{equation}
then the trial step $s_k$ leads to an improvement in $f(x_k+s_k)$ such that
\begin{eqnarray}f(x_k+s_k)-f(x_k)\le - \frac{\kappa_{scd}}{4} (-\lambda_{min}(H_k))\d_k^2 .
\end{eqnarray}
\end{lemma}

\begin{proof} 
Whenever $\lambda_{\min}(H_k)<0$, 
the optimal decrease condition \eqref{fod-final} ensures that
\begin{eqnarray*}
m_k(x_k)-m_k(x_k+s_k) \ge \frac{\kappa_{scd}}{2} (-\lambda_{min}(H_k))\d_k^2 .
\end{eqnarray*}

Since the model is $\kappa$-fully quadratic, the improvement in $f$ achieved by $s_k$ is
\begin{eqnarray*}
&&f(x_k+s_k)-f(x_k)\\
&=& f(x_k+s_k)-m(x_k+s_k)+m(x_k+s_k)-m(x_k)+m(x_k)-f(x_k)\\
&\le &2\kappa_{ef}\d_k^3-\frac{\kappa_{scd}}{2} (-\lambda_{min}(H_k))\delta_k^2\\
&\le & - \frac{\kappa_{scd}}{4} (-\lambda_{min}(H_k))\delta_k^2
\end{eqnarray*}
where the last inequality is implied by  \eqref{2:delta-minH}.
\hfill\end{proof}

\begin{lemma}\label{lemma.delta.2_2} 
{\rm [Good quadratic model $\Rightarrow$ function reduction in $\lambda_{\min}(\nabla^2 f(x_k))$]}
Let Assumption \ref{assumptions:Lip2_cont} hold.
Suppose that a model is
  $(\kappa_{ef},\kappa_{eg},\kappa_{eh})$-fully quadratic on
  $B(x_k,\d_k)$. If  
\begin{eqnarray}\label{condition.lemma.delta.2_2}
\d_k\le  \frac{1}{\frac{8\kappa_{ef}}{\kappa_{scd}}+\kappa_{eh}}  (-\lambda_{min}(\nabla^2f(x_k))),
\end{eqnarray}
then the trial step $s_k$ leads to an improvement in $f(x_k+s_k)$ such that
\begin{eqnarray}f(x_k+s_k)-f(x_k)\le - C_4 (-\lambda_{min}(\nabla^2f(x_k)))\d^2_k , \label{eqn.true.decrease_2}
\end{eqnarray}
for any $C_4\leq \frac{\kappa_{scd}}{4}\cdot \frac{8\kappa_{ef}}{8\kappa_{ef}+\kappa_{scd}\kappa_{eh}}.$
\end{lemma}

\begin{proof}
The definition of a $\kappa$-fully-quadratic model, by Corollary 8.5.6 from \cite{GolubVanLoan} yield that
\begin{equation}\label{2:BvL}
-\lambda_{min}(H_k)\ge (-\lambda_{min}(\nabla^2f(x_k)))-\kappa_{eh}\delta_k.
\end{equation}
Since condition (\ref{condition.lemma.delta.2_2})  implies that $-\lambda_{min}(\nabla^2f(x_k)) \ge(\frac{8\kappa_{ef}}{\kappa_{scd}}+\kappa_{eh})\d_k$,  we have
$$-\lambda_{min}(H_k) \ge  \frac{8\kappa_{ef}}{\kappa_{scd}} \d_k.$$
Hence, the conditions of Lemma \ref{lemma.delta.1_2} hold and we have 
\begin{eqnarray}\label{lemma.delta.2.eqn.1_2}
f(x_k+s_k)-f(x_k)\le  - \frac{\kappa_{scd}}{4} (-\lambda_{min}(H_k))\d^2_k .
\end{eqnarray}
From \eqref{2:BvL} and (\ref{condition.lemma.delta.2_2}), we also have 
\begin{eqnarray}\label{lemma.delta.2.eqn.2_2}
-\lambda_{min}(H_k) \ge  \frac{8\kappa_{ef}}{8\kappa_{ef}+\kappa_{scd}\kappa_{eg}}(-\lambda_{min}(\nabla^2f(x_k)))
\end{eqnarray}
Combining (\ref{lemma.delta.2.eqn.1_2}) and (\ref{lemma.delta.2.eqn.2_2}) yields (\ref{eqn.true.decrease_2}).
\hfill\end{proof}

\begin{lemma}\label{lemma.delta.3_2} 
{\rm [Good quadratic model + good s.o.~estimates $\Rightarrow$ successful step]}
Let Assumption \ref{assumptions:Lip2_cont} hold.
Suppose that $m_k$ is $(\kappa_{ef},\kappa_{eg},\kappa_{eh})$-fully quadratic on $B(x_k,\d_k)$ and the estimates $\{ f_k^0,f_k^s \}$ are $\epsilon_F$-s.o.accurate with $\epsilon_F\le \kappa_{ef}$. If  
\begin{eqnarray}\label{condition.lemma.delta.3_2}
 \d_k \le  \min\left\{\frac{1}{\eta_2}, \dfrac{ \kappa_{scd}  (1-\eta_1)}{ 8\kappa_{ef} } \right\} (-\lambda_{min}(H_k)), 
 \end{eqnarray}
then the $k$-th iteration is successful.
\end{lemma}

\begin{proof}
From the model decrease condition \eqref{fod-final}  
\begin{eqnarray}
m_k(x_k) -m_k(x_k+s_k) \ge  \dfrac{\kappa_{scd}}{2}(-\lambda_{min}(H_k))\d_k^2.\label{eqn:deltainc1_2}
\end{eqnarray}
The model $m_k$ being  $(\kappa_{ef},\kappa_{eg})$-fully quadratic implies that
\begin{eqnarray}
|f(x_k)-m_k(x_k)|&\le& \kappa_{ef}\d_{k}^3,\mbox{ and } \label{eqn:deltainc2_2}\\
|f(x_k+s_k)-m_k(x_k+s_k)|&\le& \kappa_{ef}\d_{k}^3.\label{eqn:deltainc3_2}
\end{eqnarray}
Since the estimates are $\epsilon_F$-s.o.accurate with $\epsilon_F\le \kappa_{ef}$, we obtain\begin{eqnarray}
|f_k^0-f(x_k)|\le \kappa_{ef}\d_k^3,\mbox{ and } |f_k^s-f(x_k+s_k)|\le \kappa_{ef}\d_k^3.\label{eqn:deltainc4_2}
\end{eqnarray}

We have
\begin{eqnarray*}
\rho_k &= &\dfrac{f_k^0-f_k^s}{m_k(x_k)-m_k(x_k+s_k)}\\
&=&  \dfrac{f_k^0-f(x_k) }{m_k(x_k)-m_k(x_k+s_k)}+\dfrac{f(x_k) -m_k(x_k)}{m_k(x_k)-m_k(x_k+s_k)}+\dfrac{m_k(x_k)-m_k(x_k+s_k) }{m_k(x_k)-m_k(x_k+s_k)}\\
&& + \dfrac{m_k(x_k+s_k)-f(x_k+s_k)}{m_k(x_k)-m_k(x_k+s_k)}+\dfrac{f(x_k+s_k)   -f_k^s   }{m_k(x_k)-m_k(x_k+s_k)},
\end{eqnarray*}
which, combined with (\ref{eqn:deltainc1_2})-(\ref{eqn:deltainc4_2}), implies
\begin{eqnarray*}
| \rho_k-1|\le \dfrac{8\kappa_{ef}\d_k^3}{\kappa_{scd}(-\lambda_{min}(H_k))\d_k^2 }\le 1-\eta_1,
\end{eqnarray*}
where we have used the assumptions $ \d_k \le \frac{ \kappa_{scd}(1-\eta_1) }{ 8\kappa_{ef}  }(-\lambda_{min}(H_k))$ to deduce the last inequality. Hence, $\rho_k\ge\eta_1$. Moreover, the first term in 
\eqref{condition.lemma.delta.3_2} and \eqref{2:model-optimality-2} imply $\tau_k^m\geq(-\lambda_{min}(H_k)) \geq \eta_2\delta_k$. Thus the $k$-th iteration is successful.
\hfill\end{proof}

\begin{lemma}\label{lemma.delta.4_2}
{\rm [Good s.o.~estimates + successful step $\Rightarrow$ function decrease in $\delta_k^3$]}
Assume the estimates $\{f_k^0,f_k^s  \}$ are
$\epsilon_F$-s.o.accurate with $\epsilon_F< \frac{1}{4}\eta_1\eta_2\min\{1,\eta_2\}
\kappa_{scd}$. If $\delta_k\leq 1$ and a trial step $s_k$ is accepted (a successful iteration occurs), then the improvement in $f$ is bounded below as follows
\begin{eqnarray}\label{eqn.lemma.4_2}
f(x_{k+1})-f(x_k)\le -C_2\delta_k^3,\end{eqnarray}
where 
\begin{equation}\label{eq:C2_2}
C_2 =\frac{1}{2}\eta_1\eta_2\min\{1,\eta_2\} \kappa_{scd}-2\epsilon_F>0.
\end{equation}
\end{lemma}
\begin{proof} An iteration being successful indicates that $\rho\ge \eta_1$ and either $\min \left\{ \|g_k\|, \frac{\| g_k\| }{\| H_k \|} \right \}\ge\eta_2 \d_k$ or $-\lambda_{min}(H_k)\geq \eta_2 \d_k$. 
First let us assume that $\min \left\{ \|g_k\|, \frac{\| g_k\| }{\| H_k \|} \right \}\ge\eta_2 \d_k$; then 
\begin{eqnarray*}
f_k^0-f_k^s &\ge&\eta_1(m_k(x_k)-m_k(x_k+s_k))\\
&\ge&\eta_1 \frac{\kappa_{scd}}{2} \| g_k \| \min \left\{ \frac{\| g_k\| }{\| H_k \|} ,\delta_k\right\}\\
&\ge& \frac{1}{2} \eta_1 \kappa_{scd}\eta_2\min\{1,\eta_2\}\delta_k^2\\
&\ge& \frac{1}{2} \eta_1 \kappa_{scd}\eta_2\min\{1,\eta_2\}\delta_k^3,
\end{eqnarray*}
where we also used $\delta_k\leq 1$.

Let us now assume that $-\lambda_{min}(H_k)\geq \eta_2 \d_k$,  thus,
\begin{eqnarray*}
f_k^0-f_k^s &\ge&\eta_1(m_k(x_k)-m_k(x_k+s_k))\\
&\ge&\eta_1 \frac{\kappa_{scd}}{2} ( -\lambda_{min}(H_k))\delta_k^2\\
&\ge& \frac{1}{2} \eta_1\eta_2 \kappa_{scd}\delta_k^3\\
&\ge& \frac{1}{2} \eta_1 \kappa_{scd}\eta_2\min\{1,\eta_2\}\delta_k^3.
\end{eqnarray*}

Thus in both cases, using the fact that the estimates are $\epsilon_F$-s.o.accurate, we have 
\begin{eqnarray*}
f(x_k+s_k)-f(x_k)= f(x_k+s_k)-f_k^s+f_k^s-f_k^0+f_k^0-f(x_k)
\le  -C_2\delta_k^3,
\end{eqnarray*}
here $C_2$ is defined in \eqref{eq:C2_2}.
\hfill\end{proof}

\paragraph{Choosing constants}\label{page:constants_2}
To simplify our calculations, just like for the first order case, we particularize our choices of constants, but we will
clearly state  when we use these choices. We let $\kappa_{scd}=0.5$, $\eta_1=0.1$,  $\gamma=2$, $\delta_{\max}=1$ and 
$\kappa_{ef}=\kappa_{eg}=\kappa_{eh}=\Theta(\overline{L})$, where $\overline{L}=\max\{L,L_H\}$. To satisfy 
Assumption \ref{ass:models_estim2}, we let $\epsilon_F=\frac{1}{160}\eta_2\min\{1,\eta_2\}\leq \kappa_{eh}$ and
$\eta_2\leq 18$. Note that we cannot impose upper bounds on $\kappa_{bhm}$ as the latter cannot be chosen freely, namely, 
from Lemma \ref{lemma:bmh}, we have $\kappa_{bhm}=\kappa_{eh}+L\leq 2\max\{\kappa_{eh}, \overline{L}\}$.

\subsection{Defining and analysing the process $\{\Phi_k,\Delta_k\}$ for
the second order case}

As the order of the function decrease that can be guaranteed on good
iterations of Algorithm \ref{algo:stodfosecond}
changes from the first order $\delta_k^2$ to $\delta_k^3$ due to
second order terms, we must modify the process $\Phi_k$ accordingly.
Namely, we let $\{\Phi_k, \Delta_k\}$ be derived from the  process  generated by Algorithm \ref{algo:stodfosecond}, 
with $\Delta_k$ - the trust region radius and 
\begin{equation}\label{eq:Phidef_2}
\Phi_k = \nu f(x_k) + (1-\nu)\Delta_k^3
\end{equation}
where $\nu\in (0,1)$ is a deterministic, large enough constant, which we will define later, and  $\Phi_k\geq 0$. 
We also define the  random time 
\begin{equation}\label{eq:Tdef2}
T_{\epsilon} = \inf\{k\geq 0 : \|\nabla f(X_k)\|\leq\epsilon \quad{\rm
  and}\quad \lambda_{\min}(\nabla^2 f(X_k))\geq -\epsilon\},
\end{equation} 
which is a stopping time for the stochastic process defined by Algorithm \ref{algo:stodfosecond} and hence for $\{\Phi_k, \Delta_k\}$ as in \eqref{eq:Phidef_2}.
To bound the expected stopping time
$\mathbb{E}(T_{\epsilon})$ for Algorithm \ref{algo:stodfosecond}, we show that Assumption \ref{ass:stoch-proc} is satisfied for 
$\{\Phi_k, \Delta_k\}$ in \eqref{eq:Phidef_2} and apply the results of Section \ref{sec:walds}.

Very similarly to the first order case, we can show that  Assumption \ref{ass:stoch-proc}(i)--(ii) holds with $\lambda=\log\gamma$, and with
the following new settings
\begin{equation}\label{eq:zeta_2}
\Delta_\epsilon = \displaystyle\frac{\epsilon}{\zeta}, \quad {\rm for \  }\zeta\geq\max\{\kappa_{eg},\kappa_{eh}\}+\max\left\{\eta_2\kappa_{bhm}, \kappa_{bhm},\frac{8\kappa_{ef}}{\kappa_{scd}(1-\eta_1)}\right\},
\end{equation}
with $\epsilon \in (0,1]$, and the (old) assumption that $\Delta_\epsilon=\gamma^i \delta_0$ for some $i\leq 0$. 
Note that \eqref{eq:zeta_2}, $\epsilon \in (0,1]$ and $\kappa_{bhm}\geq 1$ imply that $\Delta_{\epsilon}\leq 1$.

\begin{lemma}\label{lem:Deltak2}
Let Assumptions \ref{assumptions:Lip2_cont} and  \ref{ass:models_estim2} hold. Let $\alpha$ and $\beta$ be such that $\alpha\beta\geq 1/2$, then Assumption \ref{ass:stoch-proc}(ii)
is satisfied for Algorithm \ref{algo:stodfosecond} with $W_k=2(I_kJ_k-\frac{1}{2})$, $\lambda=\log\gamma$ and $p=\alpha\beta$. 
\end{lemma}
\begin{proof}
The proof follows similarly to that of Lemma \ref{lem:Deltak} and we show that, conditioned on $T_\epsilon > k$ (i.e. $I(T_\epsilon > k)=1$), where $T_{\epsilon}$ is now defined in \eqref{eq:Tdef2}, \eqref{Deltakdynamics} holds with $\Delta_{\epsilon}$ defined
in \eqref{eq:zeta_2}. The only case that differs (from the first order proof) and needs addressing is when 
 $\Delta_k\leq\Delta_\epsilon$. Then, conditioned on
$T_\epsilon > k$, we have that either $\|\nabla f(X_k)\|\geq\epsilon$
or $\lambda_{\min}(\nabla^2 f(X_k))\leq -\epsilon$ and hence, from the
definition of $\zeta$ in \eqref{eq:zeta_2},  
we know that
\begin{equation}\label{eq:tmp:grad}
 \|\nabla f(x_k)\|\ge \left(\kappa_{eg}+ \max\left\{  \eta_2\kappa_{bhm},\kappa_{bhm},
     \frac{8\kappa_{ef}}{\kappa_{scd}(1-\eta_1)}\right\}\right)\delta_k
\end{equation}
or that 
\begin{equation}\label{eq:tmp:hess}
 -\lambda_{\min}(\nabla^2 f(X_k))\geq \left(\kappa_{eh}+ \max\left\{  \eta_2,
     \frac{8\kappa_{ef}}{\kappa_{scd}(1-\eta_1)}\right\}\right)\delta_k,
 \end{equation}
where we also used that $\kappa_{bhm}\geq 1$.
Assume that $I_k=1$ and $J_k=1$, i.e., both the  model and the
estimates are good on iteration $k$.  Since the model $m_k$ is
$\kappa$-fully quadratic and $\delta_k\leq \Delta_{\epsilon}\leq 1$,
then if \eqref{eq:tmp:grad}
holds, we have 
\begin{equation}\label{eq:tmp:grad1}
\|g_k\|\ge \|\nabla f(x_k)\|-\kappa_{eg}\d_k\ge
(\zeta-\kappa_{eg})\d_k\ge \max \left\{\eta_2\kappa_{bhm},\kappa_{bhm},
  \frac{8\kappa_{ef}}{\kappa_{scd}(1-\eta_1)}
\right\}\d_k,
\end{equation}
and if  \eqref{eq:tmp:hess} holds, we have
\begin{equation}\label{eq:tmp:hess1}
 -\lambda_{\min}(H_k)\geq -\lambda_{\min}(\nabla^2
 f(X_k))-\kappa_{eh}\delta_k\geq \max\left\{  \eta_2,
     \frac{8\kappa_{ef}}{\kappa_{scd}(1-\eta_1)}\right\}\delta_k.
 \end{equation}
As the estimates $\{ f_k^0,f_k^s \}$ are $\epsilon_F$-s.o.~accurate, with
$\epsilon_F\le \kappa_{ef}$, \eqref{eq:tmp:grad1} and \eqref{eq:tmp:hess1}
imply that condition (\ref{condition.lemma.delta.3_1}) in Lemma
\ref{lemma.delta.3_1} and (\ref{condition.lemma.delta.3_2}) in Lemma
\ref{lemma.delta.3_2} hold, respectively. Thus in both cases,  
 iteration $k$ is successful, i.e. $x_{k+1}=x_k+s_k$ and $\d_{k+1}=\max\{\delta_{max},\gamma\d_k\}$.
 If $I_kJ_k=0$, then $\d_{k+1}\geq\gamma^{-1}\d_k$ simply by 
 the dynamics of Algorithm  \ref{algo:stodfosecond}. 
Finally,  observing that $P\{I_kJ_k\}\geq p=\alpha\beta$ we conclude that  \eqref{Deltakdynamics} implies Assumption \ref{ass:stoch-proc}(ii). 
\end{proof}


To show that Assumption \ref{ass:stoch-proc}(iii) holds, we need an
additional assumption on the accuracy of the function
estimates. We also require, for simplicity, an upper bound on the trust-region radius in Algorithm \ref{algo:stodfosecond}\footnote{This restriction can be avoided if one allows a more involved discussion on dominating
terms in the proofs of Lemmas \ref{lemma.delta.1_1}--\ref{lemma.delta.3_1} and \ref{lemma.delta.4_2}, and in the proof of the main result.}.


\begin{assumption}\label{assumption:expectationF}
\begin{itemize}
\item[(a)]
There exists a constant $\kappa_F$ such that at any iteration $k$,
$$
\E[|F_k^0-f(x_k^0)| |\mathcal{F}_{k-1/2}^{M\cdot F}]\leq \kappa_F \delta^3_k
$$
and
$$
\E[|F_k^s-f(x_k+s_k)| |\mathcal{F}_{k-1/2}^{M\cdot F}]\leq \kappa_F \delta^3_k.
$$
\item[(b)] The upper bound $\delta_{\max}$ in Algorithm \ref{algo:stodfosecond} is chosen so that $\delta_{\max}\leq 1$.  
\end{itemize}
\end{assumption}
 
 Note that the bound on the expectation of $|F_k^0-f(x_k^0)|$ and $|F_k^s-f(x_k+s_k)|$,  in principle, implies that the estimates are $\beta$-probabilistically $\epsilon_F$-s.o.~accurate. However, for $\epsilon_F$ to satisfy the conditions in Assumption  \ref{ass:models_estim2} (b) conditions would have to be imposed on $\kappa_F$. Thus, for our purposes here, we choose to  have any finite $\kappa_F>0$ and to impose the bound only on $\epsilon_F$. 

Assumption \ref{assumption:expectationF}(a) is needed for the case when we have a bad model and bad estimates,
and when the (true) objective may increase after a successful step. Without this assumption, it is possible that the increase
in the objective is at most of order $\delta_k^2$ (due to first order terms), while the decrease (on other successful steps)
may be smaller, of order $\delta_k^3$ (due to second order terms). Such a situation would make it impossible to balance out the
increase and decrease in the objective over the course of the algorithm in such a way to ensure that the stochastic process $\Phi_k$ decreases on average. 

We now prove that Assumption \ref{ass:stoch-proc}(iii) holds for  Algorithm \ref{algo:stodfosecond}.

\begin{theorem}\label{lem:expdec2ndorder}
Let Assumptions \ref{assumptions:Lip2_cont},  \ref{ass:models_estim2}
and \ref{assumption:expectationF} hold. Then,
there exist probabilities $\alpha$ and $\beta$ and a constant $\Theta>0$ such that, conditioned on 
$T_{\epsilon}>k$,  for each iteration $k$ of Algorithm \ref{algo:stodfosecond}, we have 

\begin{equation}
\label{eq:so_dynamics}
1(T_\epsilon > k)\E[\Phi_{k+1}-\Phi_k|\mathcal{F}_{k-1}^{M\cdot F} ]  \leq  -1(T_\epsilon > k)\Theta  \Delta^3_k,
\end{equation}
where $T_{\epsilon}$ is defined in \eqref{eq:Tdef2}, and $\Phi_k$ in \eqref{eq:Phidef_2}.

Moreover, under the particular choice of constants described on page \pageref{page:constants_2}, 
let $\alpha$ and $\beta$ satisfy
\begin{equation}\label{const:alphabeta_2}
(1-\alpha)(1-\beta)\leq \min\left\{0.05, 0.0003\frac{\eta_2\min\{1,\eta_2\}}{\kappa_F}\right\}
\end{equation}
and 
\begin{equation}\label{const:beta_2}
\beta\geq \frac{\kappa_F+0.008\eta_2\min\{1,\eta_2\}}{\kappa_F+0.0085\eta_2\min\{1,\eta_2\}}
\end{equation}
Then,  $\zeta=20\kappa_{bhm}=20(\kappa_{eh}+L)$ and $\Theta\geq  6\cdot 10^{-4}\eta_2\min\{1,\eta_2\}$. 
\end{theorem}

\begin{proof} 
Since \eqref{eq:so_dynamics} easily holds if $T_{\epsilon}\leq k$, we assume in what follows that $T_{\epsilon}>k$ and so
$\tau(x_k)>\epsilon$, where $\tau(x)$ is defined in \eqref{2:f-optimality}. We will consider two possible cases: $\tau(x_k)\geq \zeta \delta_k$ and $\tau(x_k)< \zeta \delta_k$,
where $\zeta$ is defined in \eqref{eq:zeta_2}. We show that \eqref{eq:so_dynamics} holds in both cases and so for all $k<T_{\epsilon}$. Let  $\nu\in (0,1)$ be such that
\begin{eqnarray}\label{eq:nu2}
\frac{\nu}{1-\nu}&\geq & \frac{\gamma^3}{\min \left\{ \zeta C_1, \zeta C_4, C_2 \right\}},
\end{eqnarray}
with $C_1$ defined as in Lemma \ref{lemma.delta.2_1}, $C_4$  in Lemma \ref{lemma.delta.2_2}, and $C_2$ in Lemma \ref{lemma.delta.4_2}.
Note that on all successful iterations, $x_{k+1}=x_k+s_k$ and $\delta_{k+1}=\min\{\gamma \delta_k, \delta_{max}\}$ with $\gamma>1$, hence
\begin{eqnarray} 
\phi_{k+1}-\phi_k\leq\nu (f(x_{k+1})-f(x_k))+(1-\nu)(\gamma^3-1)\delta_k^3.\label{eqn.suc.phi_2}
\end{eqnarray} 
On  all unsuccessful iterations, $x_{k+1}=x_k$ and $\delta_{k+1}=\frac{1}{\gamma} \delta_k$, i.e. 
\begin{eqnarray}
\phi_{k+1}-\phi_k=(1-\nu)(\frac{1}{\gamma^3}-1)\delta_k^3\equiv b_1<0.\label{eqn.unsuc.phi_2}
\end{eqnarray}

{\bf Case 1: $\tau(x_k)=\max \{\|\nabla f(x_k)\|, -\lambda_{\min}(\nabla^2 f(x_k))\}\geq \zeta \delta_k$, where $\zeta$ is defined in \eqref{eq:zeta_2}.}
\begin{itemize}
\item[a.] $I_k=1$ and $J_k=1$, i.e., both the  model and the  estimates are good on iteration $k$.  
From the definition of $\zeta$ and Case 1, we know that  either \eqref{eq:tmp:grad} or \eqref{eq:tmp:hess} hold.
Since $I_k=1$ and $\delta_{\max}\leq 1$ (Assumption \ref{assumption:expectationF}(b)), 
\eqref{eq:tmp:grad} and \eqref{eq:tmp:hess}
imply that either condition (\ref{condition.lemma.delta.2_1}) in Lemma \ref{lemma.delta.2_1} or condition \eqref{condition.lemma.delta.2_2} in Lemma \ref{lemma.delta.2_2}
hold. Therefore, the trial step $s_k$ leads to a decrease in $f$ as in (\ref{eqn.true.decrease_1}) or as in (\ref{eqn.true.decrease_2}). Again from $I_k=1$ and $\delta_{\max}\leq 1$, \eqref{eq:tmp:grad} or \eqref{eq:tmp:hess}
imply that \eqref{eq:tmp:grad1} or \eqref{eq:tmp:hess1} hold, respectively. 
As $J_k=1$,  and $\epsilon_F\le \kappa_{ef}$, \eqref{eq:tmp:grad1} and \eqref{eq:tmp:hess1}
imply that condition (\ref{condition.lemma.delta.3_1}) in Lemma
\ref{lemma.delta.3_1} or (\ref{condition.lemma.delta.3_2}) in Lemma
\ref{lemma.delta.3_2} hold, respectively. Thus in both cases,  
 iteration $k$ is successful, i.e. $x_{k+1}=x_k+s_k$ and $\d_{k+1}=\max\{\delta_{max},\gamma\d_k\}$.

Combining (\ref{eqn.true.decrease_1}) and (\ref{eqn.suc.phi_2}),  we get
\begin{eqnarray}
\phi_{k+1}-\phi_k\le -\nu C_1\| \nabla f(x_k)\|\delta^2_k +(1-\nu)(\gamma^3-1)\delta_k^3,\label{eqn.suc.phi.truedecrease1_2}
\end{eqnarray}
with $C_1$ defined in Lemma \ref{lemma.delta.2_1}.  Since  $\|\nabla f(x_k)  \| \ge \zeta \delta_k$ we have 
\begin{eqnarray}
\phi_{k+1}-\phi_k\le [-\nu C_1 \zeta+(1-\nu)(\gamma^3-1)]\delta_k^3\leq b_1,\label{eq:truedecneg_2}
\end{eqnarray}
with $b_1$ defined in \eqref{eqn.unsuc.phi_2}, for $\nu\in(0,1)$ satisfying \eqref{eq:nu2}.

Combining (\ref{eqn.true.decrease_2}) and (\ref{eqn.suc.phi_2}),  we get
\begin{eqnarray}
\phi_{k+1}-\phi_k\le \nu C_4\lambda_{min}(\nabla^2 f(X^k))\delta^2_k +(1-\nu)(\gamma^3-1)\delta_k^3,\label{eqn.suc.phi.truedecrease1_3}
\end{eqnarray}
with $C_4$ defined in Lemma \ref{lemma.delta.2_2}.  Again, since  $-\lambda_{min}(\nabla^2 f(X^k)) \ge \zeta \delta_k$ we have 
\begin{eqnarray}
\phi_{k+1}-\phi_k \leq [-\nu C_4 \zeta+(1-\nu)(\gamma^3-1)]\delta_k^3\leq b_1,\label{eq:truedecneg_3}
\end{eqnarray}
with $b_1$ defined in \eqref{eqn.unsuc.phi_2}, for $\nu\in(0,1)$ satisfying \eqref{eq:nu2}.

\item[b.] $I_k=1$ and  $J_k=0$, i.e., we have a good model and bad estimates  on iteration  $k$. 
Then the analysis of case (a) applies, and 
either by Lemma \ref{lemma.delta.2_1}  or \ref{lemma.delta.2_2}, $s_k$ yields a sufficient decrease in $f$.  
However, the step can be erroneously  rejected, because of inaccurate probabilistic estimates, 
in which case  we have an unsuccessful iteration and (\ref{eqn.unsuc.phi_2}) holds. 
Since  \eqref{eq:nu2} holds, (\ref{eqn.unsuc.phi_2}) applies  whether the iteration is successful or not.

\item[c.] $I_k=0$  and  $J_k=1$, i.e., we have a bad model and good estimates  on iteration  $k$. 
 In this case, again, iteration $k$ can be either successful or unsuccessful; in the latter case,
 (\ref{eqn.unsuc.phi_2}) holds. In the former, since the estimates are $\epsilon_F$-accurate and \eqref{eq:estimates_2}
 holds,  then by Lemma \ref{lemma.delta.4_2} and Assumption \ref{assumption:expectationF}(b), (\ref{eqn.lemma.4_2}) holds with some 
 $C_2>0$, and so,
\begin{eqnarray}
\phi_{k+1}-\phi_k\le [-\nu C_2+(1-\nu)(\gamma^3-1)]\delta_k^3\leq b_1, \label{2:eqn.suc.phi.truedecrease2}
\end{eqnarray}
due to  the choice of $\nu$  satisfying  \eqref{eq:nu2}.

\item[d.] $I_k=0$ and $J_k=0$, i.e., both the  model and the  estimates are bad on iteration $k$.
Inaccurate estimates can cause the algorithm to accept a bad step, 
which may lead to an increase both in $f$ and in $\delta_k$. Hence in this case $\phi_{k+1}-\phi_{k}$ may  be positive. 
We can derive  a bound on the  increase of $f(x_k)$  on successful steps in terms of the error of the estimates 
\begin{eqnarray}\label{eq:bound_inc_2}
\phi_{k+1}-\phi _{k} &\leq& \nu(f(x_k+s_k)-f(x_k))+(1-\nu) (\gamma^3-1)\delta_k^3 \nonumber  \\&\le& \nu( (f(x_k+s_k)-f_k^s)+ (f_k^s-f_k^0) + (f(x_k)-f_k^0)) +(1-\nu) (\gamma^3-1)\delta_k^3\nonumber  \\ &\leq & \nu( |f(x_k+s_k)-f_k^s|+ |f(x_k)-f_k^0)| +(1-\nu) (\gamma^3-1)\delta_k^3.
 \end{eqnarray}
 On unsuccessful steps (\ref{eqn.unsuc.phi_2}) still applies, which means that the right-hand side of \eqref{eq:bound_inc_2} dominates and \eqref{eq:bound_inc_2}  holds
 whether the iteration is successful or not.  Note that here, in (d), we have not used the definition of Case 1.
 \end{itemize}
\vskip0.5cm

Now we are ready to take the expectation of $\Phi_{k+1}-\Phi_{k}$ in Case 1. 
Case (d) occurs with probability at most $(1-\alpha)(1-\beta)$ and in that case $\phi_{k+1}-\phi_{k}$ is bounded from above as in \eqref{eq:bound_inc_2}. 
Cases (a), (b) and (c) occur otherwise
and in those cases $\phi_{k+1}-\phi_{k}$ is bounded from above by $b_1<0$, with $b_1$ defined in (\ref{eqn.unsuc.phi_2}). 
Hence,  we obtain 
\begin{eqnarray*}
&&\E[\Phi_{k+1}-\Phi_{k}|\mathcal{F}_{k-1}^{M\cdot F}, \{  \tau(X_k)\geq \zeta \Delta_k \} ] \\= &&
\E[\Phi_{k+1}-\Phi_{k}|\mathcal{F}_{k-1}^{M\cdot F}, I_k+J_k=0]+
\E[\Phi_{k+1}-\Phi_{k}|\mathcal{F}_{k-1}^{M\cdot F}, \{  \tau(X_k)\geq \zeta \Delta_k \}, I_k+J_k>0 ] \\ \leq &&
 (1-\alpha)(1-\beta)\left (\nu \E[|f(x_k+s_k)-f_k^s|+ |f(x_k)-f_k^0|| \mathcal{F}_{k-1}^{M\cdot F}]+(1-\nu)(\gamma^3-1)\E[\Delta_k^3|\mathcal{F}_{k-1}^{M\cdot F}]\right )\\+&&
 (1- (1-\alpha)(1-\beta)) (1-\nu)(\frac{1}{\gamma^3}-1)\E[\Delta_k^3|\mathcal{F}_{k-1}^{M\cdot F}]
\end{eqnarray*}
Recalling Assumption \ref{assumption:expectationF},
noting that $\E[\Delta_k^3|\mathcal{F}_{k-1}^{M\cdot F}]=\Delta_k^3$ and rearranging the terms we obtain 
\begin{eqnarray*}
&&\E[\Phi_{k+1}-\Phi_{k}|\mathcal{F}_{k-1}^{M\cdot F}, \{  \tau(X_k)\geq \zeta \Delta_k \}]  \\ \leq &&
 \left ((1-\alpha)(1-\beta)2\nu\kappa_F + (1-\nu)(\frac{1}{\gamma^3}-1)  +  (1-\alpha)(1-\beta) (1-\nu)(\gamma^3-\frac{1}{\gamma^3})\right )\Delta_k^3
\end{eqnarray*}
Choosing $0<\alpha\leq 1$ and  $0<\beta\leq 1$ such that 
\begin{equation}\label{2:alphabeta}
(1-\alpha)(1-\beta)\leq \min\left\{ \frac{1}{2(\gamma^3+1)}, 
\frac{1-\nu}{8\kappa_F\nu}\left(1-\frac{1}{\gamma^3}\right)\right\},
\end{equation}
we conclude that 
\begin{equation}\label{case1:exp}
\E[\Phi_{k+1}-\Phi_{k}|\mathcal{F}_{k-1}^{M\cdot F},\{  \tau(X_k)\geq \zeta \Delta_k \}]  \leq -\frac{1}{4}(1-\nu)\left(1-\frac{1}{\gamma^3}\right)\Delta_k^3.
\end{equation}

{\bf Case 2: $\tau(x_k)=\max \{\|\nabla f(x_k)\|, -\lambda_{\min}(\nabla^2 f(x_k))\}< \zeta \delta_k$, where $\zeta$ is defined in \eqref{eq:zeta_2}.}
\begin{itemize}
\item[i.] $J_k=1$, namely, we have good estimates but the model may be bad. This case follows similarly to 
Case 1(c), and (\ref{2:eqn.suc.phi.truedecrease2}) holds on successful steps. Thus 
decrease $b_1$ can again be guaranteed for $\Phi_k$ whether the iteration is successful or not. 

\item[ii.] $J_k=0$, namely, we have bad estimates and the model may be bad too. In this case, the situation of Case 1(d) may
occur and both $f$ and $\delta_k$ may  increase. Then we can upper bound the potential increase in $\Phi_k$ by
\eqref{eq:bound_inc_2} on both successful and unsuccessful steps\footnote{Note that under additional assumptions on $\kappa_{ef}$
and $\eta_2$, one can further refine the analysis here and take into account the decrease in $\Phi_k$ that could then be achieved
when $I_k=1$}.
\end{itemize}
Now we are ready to take the expectation of $\Phi_{k+1}-\Phi_{k}$ in Case 2. 
Case 2(i) occurs with probability at least $\beta$, 
 and then $\phi_{k+1}-\phi_{k}$ is bounded above by $b_1<0$, with $b_1$ defined in (\ref{eqn.unsuc.phi_2}). 
 Case 2(ii) happens with probability at most $(1-\beta)$, and possible increase in $\Phi_k$ 
 as in \eqref{eq:bound_inc_2}. 
 We obtain
\[
\begin{array}{l}
\E[\Phi_{k+1}-\Phi_{k}|\mathcal{F}_{k-1}^{M\cdot F}, \{  \tau(X_k)< \zeta \Delta_k \} ]  \leq\\[1ex]
(1-\beta)\left (\nu \E[|f(x_k+s_k)-f_k^s|+ |f(x_k)-f_k^0|| \mathcal{F}_{k-1}^{M\cdot F}]+(1-\nu)(\gamma^3-1)\E[\Delta_k^3|\mathcal{F}_{k-1}^{M\cdot F}]\right )\\\\[1ex]+
 \beta (1-\nu)(\frac{1}{\gamma^3}-1)\E[\Delta_k^3|\mathcal{F}_{k-1}^{M\cdot F}]
\end{array}
\]
From Assumption \ref{assumption:expectationF},
and $\E[\Delta_k^3|\mathcal{F}_{k-1}^{M\cdot F}]=\Delta_k^3$  we obtain 
\begin{equation}
\begin{array}{l}
\E[\Phi_{k+1}-\Phi_{k}|\mathcal{F}_{k-1}^{M\cdot F}, \{  \tau(X_k)< \zeta \Delta_k \}]  \leq\\[1ex]
 \left \{(1-\beta)[2\nu\kappa_F + (1-\nu)(\gamma^3-1)]  +  \beta (1-\nu)\left(\frac{1}{\gamma^3}-1\right)\right\}\Delta_k^3
\end{array}
\end{equation}
Choosing $\beta\in (0,1]$ such that
\begin{equation}\label{2:beta-case2}
\frac{\beta}{1-\beta}\geq \frac{2\gamma^3[2\nu\kappa_F + (1-\nu)(\gamma^3-1)]}{(1-\nu)(\gamma^3-1)},
\end{equation}
we conclude that 
\begin{equation}\label{case2:exp}
\E[\Phi_{k+1}-\Phi_{k}|\mathcal{F}_{k-1}^{M\cdot F},\{  \tau(X_k)< \zeta \Delta_k \}]  \leq 
-\frac{1}{2}\beta(1-\nu)\left(1-\frac{1}{\gamma^3}\right)\Delta_k^3.
\end{equation}
Thus, in conclusion, for $\nu$ satisfying \eqref{eq:nu2} and $\alpha$ and $\beta$ satisfying \eqref{2:alphabeta} and \eqref{2:beta-case2}, the expected decrease in $\Phi_k$ in \eqref{eq:so_dynamics} holds, with 
\[
\Theta=\frac{1}{4}\min\{2\beta,1\}(1-\nu)\left(1-\frac{1}{\gamma^3}\right).
\]

Now, let us particularize the constants as on page \pageref{page:constants_2}. Firstly, using $\eta_2\leq 18$
and $\kappa_{bhm}=\kappa_{eh}+L$, we deduce that $\zeta:= 20\kappa_{bhm}=20(\kappa_{eh}+L)$ satisfies \eqref{eq:zeta_2}.
Furthermore, letting $C_1=C_4=\frac{1}{10}$ satisfies the conditions in Lemmas \ref{lemma.delta.2_1} and \ref{lemma.delta.2_2},
and from Lemma \ref{lemma.delta.4_2} and particular choice of $\epsilon_F$ we conclude, 
$C_2=\frac{1}{80}\eta_2\min\{1, \eta_2\}$. Thus, from \eqref{eq:nu2} and $\epsilon_F\leq \kappa_{eh}\leq \kappa_{bhm}$, $\nu$ must satisfy
\[
\frac{\nu}{1-\nu}\geq  \frac{8}{\min \left\{ 2\kappa_{bhm}, C_2 \right\}}=\frac{320}{\eta_2\min\{1, \eta_2\}}.
\]
We let $\nu= \frac{320}{320+ \eta_2\min\{1, \eta_2\}} \in (0,1)$. Then \eqref{2:alphabeta} is equivalent to
\[
(1-\alpha)(1-\beta)\leq \min\left\{\frac{1}{18}, \frac{7\eta_2\min\{1, \eta_2\}}{2^{11}\cdot 10\kappa_F}\right\}
\]
which is implied by \eqref{const:alphabeta_2}. The bound \eqref{2:beta-case2} becomes
\[
\frac{\beta}{1-\beta}\geq 16\left[\frac{640\kappa_F}{7\eta_2\min\{1,\eta_2\}}+1\right], 
\]
which is implied by
\[
\beta \geq \frac{2\cdot 10^3\kappa_F+16\eta_2\min\{1,\eta_2\}}{2\cdot 10^3\kappa_F+17\eta_2\min\{1,\eta_2\}}
\]
which is implied by \eqref{const:beta_2}. The value of $\Theta$ follows also using $\eta_2\min\{1,\eta_2\}\leq 18$.
\end{proof}

Note the difference to first order results (Theorem \ref{thm:main}): the effect of
the stronger assumption on the estimates (Assumption \ref{assumption:expectationF} (a))
 can be clearly seen in our results, in the presence of $\kappa_F$ in the simplified
bounds. Note that, due to the choice of constants and requirements on the accuracy of the estimates,  the $\eta_2$ terms are assumed to be smaller than terms involving Lipschitz constants $\overline{L}$ and $\kappa_{eh}$, and hence, they remain present in the bounds.
Given the definition of $\eta_2$ in the algorithm, we can regard it as a means to control/ensure model quality. 

The main complexity result for second order STORM follows.

\begin{theorem}[Complexity of second order STORM algorithm]\label{the:secondorder}
Consider Algorithm \ref{algo:stodfosecond}
and the corresponding stochastic process.  Let $T_\epsilon$ be defined as in \eqref{eq:Tdef2} with $\epsilon\in (0,1]$. 
 Then, under the assumptions of Theorem \ref{lem:expdec2ndorder}, for sufficiently large $\alpha\in (0,1] $ and $\beta\in (0,1]$
 with $\alpha\beta>1/2$, we have
\begin{equation}\label{Teps_2_final}
\E[T_\epsilon] \leq \frac{\alpha\beta}{2\alpha\beta-1}
 \left(\displaystyle\frac{\Phi_0\zeta^3}
{\Theta\epsilon^3}
+1\right),
\end{equation}
where 
  $\Phi_0$ is defined  in \eqref{eq:Phidef_2} with $k=0$,  $\nu$  in 
\eqref{eq:nu2} and $\zeta$ in \eqref{eq:zeta_2}. Moreover, under the particular choice of constants described on page \pageref{page:constants_2}, and in Theorem \ref{lem:expdec2ndorder}, \eqref{Teps_2_final} becomes
\[
\E[T_\epsilon] \leq 8\cdot 10^3\frac{\alpha\beta}{2\alpha\beta-1}
 \left(\displaystyle\frac{\Phi_0 (\kappa_{eh}+L)^3}{\Theta\epsilon^3}
+1\right),
\]
where $\Theta\geq  6\cdot 10^{-4}\eta_2\min\{1,\eta_2\}$. 
\end{theorem} 

\begin{proof}
The validity of the Assumption \ref{ass:stoch-proc}(iii) follows from  Theorem \ref{lem:expdec2ndorder},
with $h(\delta)= \delta^3$ and $\Delta_{\epsilon}$ defined in \eqref{eq:zeta_2}. Then Lemma \ref{lem:Deltak2} and
the discussion preceding it imply that Theorem \ref{t_eps_bound} applies, which provides \eqref{Teps_2_final}.
\end{proof}

The  $\lim\inf$-type probability one convergence result trivially follows. 
\begin{corollary}[Convergence of second order STORM algorithm]
Under conditions of Theorem \ref{the:secondorder} Algorithm \ref{algo:stodfosecond} generates a subsequence convergent to a second order stationary point, almost surely. 
\end{corollary}

As in Section \ref{1:Storm-examples}, similar set-ups that sub-sample function, gradient and Hessian estimates can be provided 
that satisfy Assumptions \ref{ass:models_estim2} and \ref{assumption:expectationF}
for second order STORM \cite{bandeira2013convergence}.

\section{Conclusion}
We have proposed a general framework based on a stochastic process that can be used to bound expected complexity of optimization algorithms. This framework can be applied beyond the algorithms discussed in this paper and has already been used in a new work on stochastic line search \cite{PaquetteScheinberg2018}. We then applied this framework to  establish that a stochastic trust region method, with dynamic stochastic estimates of the gradient, has essentially the same complexity as any other
first order method in non convex setting. Similarly, given dynamic stochastic estimates of the gradient and Hessian the second order stochastic trust region method converges to second order stationary point and its expected complexity matches the deterministic case. While our algorithm requires the stochastic estimates to be progressively more accurate, it never requires to compute the full gradient, hence it applies in purely stochastic settings.

\bibliographystyle{plain}

\bibliography{stoch_rates}

\section{Appendix}

This Appendix contains proofs of several lemmas that are novel, but
whose proofs are similar to existing results. We include them
here for completeness.

{\bf Proof of Lemma \ref{lemma.delta.1_1}.}\quad
Using the optimal decrease condition \eqref{fod-final}, the upper bound on model Hessian from Lemma \ref{lemma:bmh},  and the fact that $\|g_k\|\ge \kappa_{bhm}\d_k$, we have
\begin{eqnarray*}
m_k(x_k)-m_k(x_k+s_k) \ge\frac{\kappa_{scd}}{2} \| g_k \| \min \left\{ \frac{\| g_k\| }{\| H_k \|} ,\delta_k\right\}= \frac{\kappa_{scd}}{2}\|g_k\|\delta_k.
\end{eqnarray*}

Since the model is $\kappa$-fully quadratic,  the improvement in $f$ achieved by $s_k$ is
\begin{eqnarray*}
&&f(x_k+s_k)-f(x_k)\\
&=& f(x_k+s_k)-m(x_k+s_k)+m(x_k+s_k)-m(x_k)+m(x_k)-f(x_k)\\
&\le &2\kappa_{ef}\d_k^3-\frac{\kappa_{scd}}{2}\|g_k\|\delta_k
\le  - \frac{\kappa_{scd}}{4} \|g_k\|\d_k,
\end{eqnarray*}
where the last inequality is implied by $\d_k^2\leq \d_k\le\frac{\kappa_{scd}}{8\kappa_{ef}} \|g_k\|$.\hfill$\Box$\\

{\bf Proof of Lemma \ref{lemma.delta.2_1}.}\quad
The definition of a $\kappa$-fully-quadratic model yields that
$$\|g_k\|\ge \|\nabla f(x)\|-\kappa_{eg}\delta_k^2 .$$ 
Since condition (\ref{condition.lemma.delta.2_1})  implies that $\| \nabla f(x_k) \| \ge\max\left\{ \kappa_{bhm}+\kappa_{eg},\frac{8\kappa_{ef}}{\kappa_{scd}}+\kappa_{eg}\right\}\d_k   $, using $\d_k\leq 1$, we have
$$\|  g_k \| \ge \max\left\{ \kappa_{bhm}  , \frac{8\kappa_{ef}}{\kappa_{scd}}\right\} \d_k.$$
Hence, the conditions of Lemma \ref{lemma.delta.1_1} hold and we have 
\begin{eqnarray}\label{lemma.delta.2.eqn.1_1}
f(x_k+s_k)-f(x_k)\le  - \frac{\kappa_{scd}}{4} \|g_k\|\d_k .
\end{eqnarray}
Since $\|g_k\|\ge \|\nabla f(x)\|-\kappa_{eg}\delta_k$ in which $\d_k$ satisfies (\ref{condition.lemma.delta.2_1}), we also have 
\begin{eqnarray}\label{lemma.delta.2.eqn.2_1}
\|g_k\| \ge   \max\left\{ \frac{\kappa_{bhm}}{\kappa_{bhm}+\kappa_{eg}},\frac{8\kappa_{ef}}{8\kappa_{ef}+\kappa_{scd}\kappa_{eg}}\right\}\|\nabla f(x_k)\| .
\end{eqnarray}
Combining (\ref{lemma.delta.2.eqn.1_1}) and (\ref{lemma.delta.2.eqn.2_1}) yields (\ref{eqn.true.decrease_1}).\hfill$\Box$\\

{\bf Proof of Lemma \ref{lemma.delta.3_1}.}\quad
Since $  \d_k \le \frac{\| g_k \|}{\kappa_{bhm}}$, the model decrease condition \eqref{fod-final} and the uniform bound on $H_k$ under Lemma  \ref{lemma:bmh} immediately yield that 
\begin{eqnarray}
m_k(x_k) -m_k(x_k+s_k) \ge \dfrac{\kappa_{scd}}{2} \|g_k\| \min\left\{  \dfrac{\| g_k\|}{\kappa_{bhm}  } ,\d_k\right\} = \dfrac{\kappa_{scd}}{2}\| g_k \|\d_k.\label{eqn:deltainc1_1}
\end{eqnarray}
The model $m_k$ being  $(\kappa_{ef},\kappa_{eg},\kappa_{eh})$-fully quadratic implies that
\begin{eqnarray}
|f(x_k)-m_k(x_k)|&\le& \kappa_{ef}\d_{k}^3,\mbox{ and } \label{eqn:deltainc2_1}\\
|f(x_k+s_k)-m_k(x_k+s_k)|&\le& \kappa_{ef}\d_{k}^3.\label{eqn:deltainc3_1}
\end{eqnarray}
Since the estimates are $\epsilon_F$-s.o.accurate with $\epsilon_F\le \kappa_{ef}$, we obtain\begin{eqnarray}
|f_k^0-f(x_k)|\le \kappa_{ef}\d_k^3,\mbox{ and } |f_k^s-f(x_k+s_k)|\le \kappa_{ef}\d_k^3.\label{eqn:deltainc4_1}
\end{eqnarray}

We have
\begin{eqnarray*}
\rho_k &= &\dfrac{f_k^0-f_k^s}{m_k(x_k)-m_k(x_k+s_k)}\\
&=&  \dfrac{f_k^0-f(x_k) }{m_k(x_k)-m_k(x_k+s_k)}+\dfrac{f(x_k) -m_k(x_k)}{m_k(x_k)-m_k(x_k+s_k)}+\dfrac{m_k(x_k)-m_k(x_k+s_k) }{m_k(x_k)-m_k(x_k+s_k)}\\
&& + \dfrac{m_k(x_k+s_k)-f(x_k+s_k)}{m_k(x_k)-m_k(x_k+s_k)}+\dfrac{f(x_k+s_k)   -f_k^s   }{m_k(x_k)-m_k(x_k+s_k)},
\end{eqnarray*}
which, combined with (\ref{eqn:deltainc1_1})-(\ref{eqn:deltainc4_1}), implies
\begin{eqnarray*}
| \rho_k-1|\le \dfrac{8\kappa_{ef}\d_k^3}{\kappa_{scd}\|g_k\| \d_k }\le 1-\eta_1,
\end{eqnarray*}
where we have used the assumptions $\d_k^2\le \d_k \le \frac{ \kappa_{scd}(1-\eta_1) }{ 8\kappa_{ef}  }  \| g_k \|   $ to deduce the last inequality. Hence, $\rho_k\ge\eta_1$. Moreover, since $\|  g_k\|\ge \eta_2\kappa_{bhm}\d_k$, then $\tau_k^m\geq\min\left\{ \| g_k\|, \dfrac{\| g_k\|}{\kappa_{bhm} } \right\} \geq \eta_2\delta_k$ and the $k$-th iteration is successful.
\hfill$\Box$

\end{document}